\newtheorem{theorem}{Theorem}[section]
\newtheorem{lemma}[theorem]{Lemma}
\theoremstyle{definition}
\newtheorem{definition}[theorem]{Definition}
\theoremstyle{remark}
\newtheorem{remark}[theorem]{Remark}
\numberwithin{equation}{section}
\newcommand{\R}{\mathbb{R}}
\newcommand{\Z}{\mathbb{Z}}
\newcommand{\PGL}{\mathrm{PGL}}
\newcommand{\SO}{\mathrm{SO}}
\newcommand{\Aut}{\mathrm{Aut}}
\begin{document}

\title[Manifolds with a cusp of non-diagonalizable type]{Convex projective manifolds with a cusp of any non-diagonalizable type}

%    Information for first author
\author{Martin D. Bobb}
%    Address of record for the research reported here
\address{Department of Mathematics,••••••••
The University of Texas at Austin,
2515 Speedway, RLM 8.100,
Austin, TX 78712}

\begin{abstract}
Recent work of Ballas, Cooper, and Leitner identifies $(n+1)$ types of $n$-dimensional convex projective cusps, one of which is the standard hyperbolic cusp. Work of Ballas-Marquis, and Ballas-Danciger-Lee give examples of these exotic (non-hyperbolic) type cusps in dimension 3. Here an extension of the techniques of Ballas-Marquis shows the existence of all cusp types in all dimensions, except diagonalizable (type $n$). \cite{Ballas-Cooper-Leitner} \cite{Ballas-Marquis}
\end{abstract}

\maketitle

\let\thefootnote\relax\footnote{MSC(2010): 57M50, 57N16, 20H10.}

\section{Introduction}
Projective geometery, that is, the projective space $P(\R^{n+1})$ with its automorphism group $\PGL(n+1,\R)$ contains within it hyperbolic geometry. To identify hyperbolic space $\mathbb{H}^n$ as a subgeometry of projective geometry, select the interior of an ellipsoid properly contained in some affine chart of $P(\R^{n+1})$. Such a domain is preserved by a subgroup of $\PGL(n+1,\R)$ and may be equipped with a metric preserved by this subgroup. Up to scaling, this construction produces the Klein model of hyperbolic geometry.

If the ellipsoid in the previous construction is replaced with a projectively inequivalent open convex set which is properly contained in an affine chart, less familiar behavior appears. Such a domain is called a \textit{properly convex projective domain}, and has its own inherent geometry, which is generally less uniform than that of $\mathbb{H}^n$. In the 1960's Benzecri studied properly convex domains equipped with cocompact group actions, and was able to prove strong results about the geometric topology of these spaces. In the 2000's, Yves Benoist used Benzecri's results, along with careful analysis of important geometric substructures to prove a number of strong and surprising results. One of the most profound of these results is a geometric JSJ decomposition for compact quotients of irreducible properly convex domains in dimension 3. The quotient manifold is topologically a union of finite-volume hyperbolic $3$-manifolds, equipped with non-hyperbolic convex projective structures. In particular, the cusps of these manifolds have (virtually) diagonalizable holonomy, as opposed to the unipotent cuspidal holonomy of hyperbolic geometry. This brings to light natural questions about what structures the cusps of convex projective manifolds can support, and which ends of convex projective manifolds deserve to be called `cusps.'

The task of understanding convex projective cusps was initiated by Cooper, Long, and Tilmann \cite{Cooper-Long-Tillmann2}. Recently, Ballas, Cooper, and Leitner gave a complete classification of convex projective cusps with compact cross-section \cite{Ballas-Cooper-Leitner}. The result of this classification is a stratified space of structures, the $(n+1)$ strata in dimension $n$ referred to as \textit{types}, $t=0,\dots,n$, of convex projective cusps. In their chosen parametrization, these are identified with the strata of the positive closed dual Weyl chamber $\mathscr{W}^n$ in $(\R^{n})^*$. A point $\psi$ on a dimension-$t$ stratum of $\mathscr{W}^n$ represents a type $t$ cusp group. Cusp types will be discussed more completely in Section \ref{section: cusp neighborhoods}.

There is an obvious realization question for these cusp types. Type $0$ cusps are the cusps of hyperbolic geometry, and so are realized in every dimension as the ends of non-compact finite-volume hyperbolic manifolds. The other cusp types are not obviously realized. Precisely, the question is whether or not for any given type $t$ and dimension $n$ there exists an $n$-manifold $M$ which is homeomorphic to a compact core union some ends so that $M$ supports a convex projective structure in which all ends are cusps, and one is type $t$.

This question was answered in the affirmative for $t=1$ in all dimensions by Ballas and Marquis by deforming the hyperbolic structures on finite-volume hyperbolic manifolds \cite{Ballas-Marquis}. In this paper, we will extend this result to answer the realization question in the affirmative for every dimension and all but one cusp type, as well as every parameter $\psi$ within the realized types. The type that remains elusive is the diagonalizable type, $t=n$. Let $\partial\mathscr{W}^n$ be the union of positive codimension strata of $\mathscr{W}^n$. Precisely, the main theorem is 

\theoremstyle{theorem}
\newtheorem*{main theorem}{Theorem \ref{theorem: main theorem}}
\begin{main theorem}
For each $n > 1$ and each $\psi \in \partial\mathscr{W}^n$, there exists a connected convex projective manifold $M$ (with no boundary) of dimension $n$ which decomposes as the union of a compact manifold with boundary and a finite collection of generalized cusps, one of which has cusp parameter $\psi$ (and is hence of type $t(\psi)$).
\end{main theorem}

These examples are found as deformations of finite-volume hyperbolic structures on arithmetic manifolds. The realization question for diagonalizable cusps remains largely mysterious (except in dimensions two and three), and this paper will not address them, except to offer speculations. In dimension three, work of Ballas, Danciger, and Lee gives many examples of diagonalizable cusps  \cite{Ballas-Danciger-Lee}. Additionally, Ballas contributes examples of one-cusped 3-manifolds with cusps of types $1$ and $2$ \cite{Ballas}.

\section{Background}

\subsection{Convex projective domains}
A \textit{properly convex projective domain} is a subset  $\Omega \subset P(\R^{n+1})$ that is convex and has closure $\bar{\Omega}$ contained in some affine chart. Let $\Aut({\Omega})$ denote the subgroup of $\PGL(n+1,\R)$ that preserves $\Omega$.

We may equip $\mathring{\Omega}$ (the interior) with an $\Aut(\Omega)$ invariant metric in the following way. For two points, $x \neq y$ in $\Omega$, there is a unique projective line intersecting both, and two points, $z_1$ and $z_2$, in $\partial\Omega$ on this line. Suppose that these points are arranged in the order $z_1, x, y, z_2$. 

\begin{figure}[h]
\centering
\includegraphics[width=.3 \textwidth]{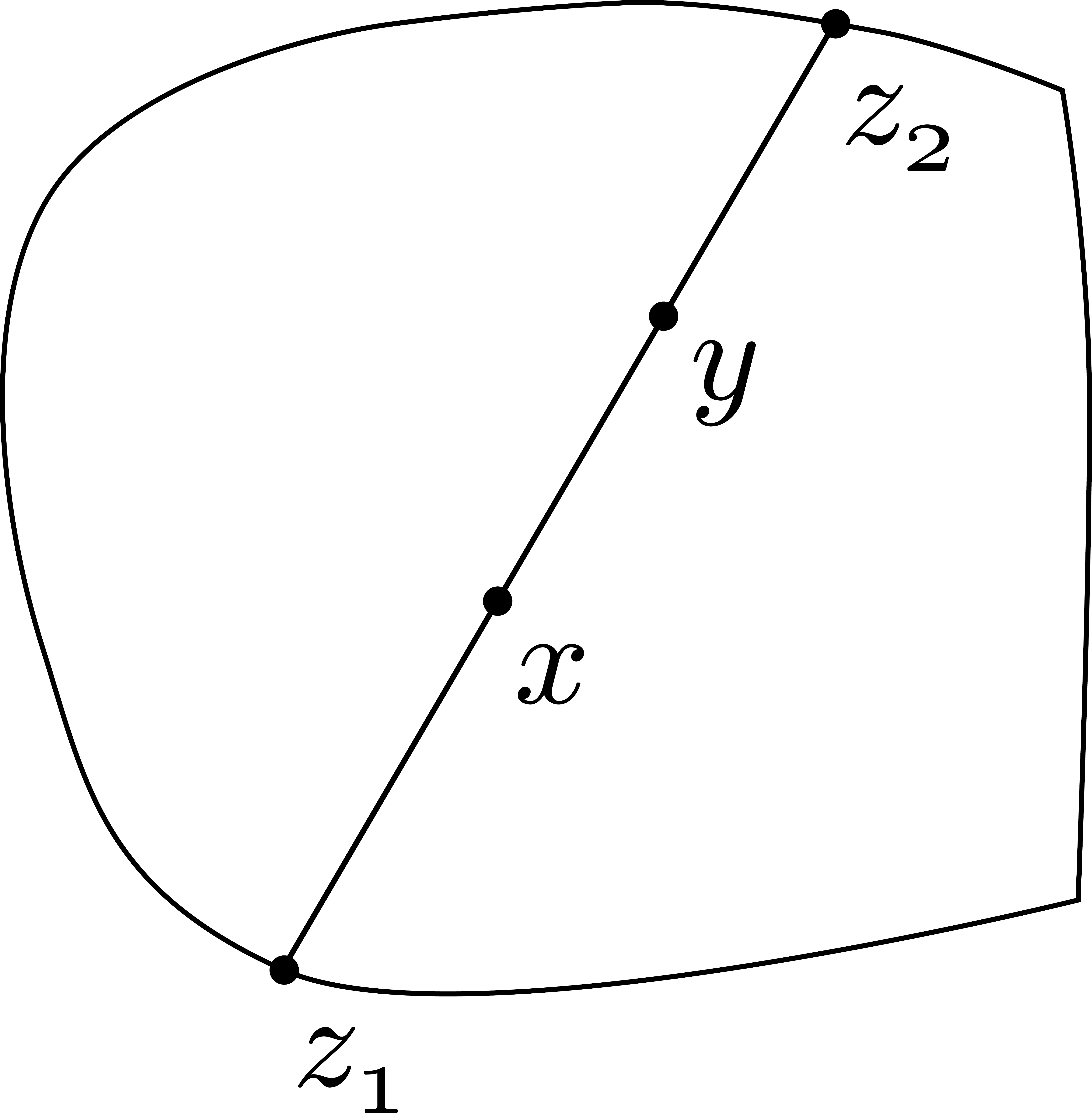}
\caption{A two-dimensional convex domain and the four points used to compute the distance from $x$ to $y$.}
\end{figure}

The line containing these four points is a copy of $P(\R^2)$ and after choosing a chart to $\R^1$ we may compute the following quantity, the \textit{cross ratio}:

$$[z_1:x:y:z_2]=\frac{|z_1-y||x-z_2|}{|z_1-x||y-z_2|}.$$

From this, we define the \textit{Hilbert metric} on $\Omega$ to be

$$d_\Omega(x,y) = \frac{1}{2}\log[z_1:x:y:z_2].$$

It is an exercise to show that the cross ratio is a projective invariant (it does not depend on the choice of chart for $P(\R^2)$), and that $d_\Omega$ is a metric whenever $\Omega$ is properly convex, or see \cite{delaHarpe}.

When $\Omega$ is an ellipsoid, $\Aut(\Omega)\cong \mathrm{PO}(n,1)$. Furthermore, $(\Omega,d_\Omega)\cong(\mathbb{H}^n,d_{\mathbb{H}^n})$ and $\Omega$ is the Klein model for hyperbolic geometry. Hence, convex projective geometry is a generalization of real hyperbolic geometry.

In general, the Hilbert metric on $\Omega$ is not induced by any Riemannian metric. However, it is possible to equip the tangent space to $\Omega$ with a smoothly varying norm (inducing the Hilbert metric), which makes $\Omega$ a Finsler manifold. Straight lines in projective space are always geodesic with respect to the Hilbert metric, though there may exist other geodesics if $\partial\Omega$ is not \textit{strictly convex}, that is, if $\partial \Omega$ contains some line segments.

\subsection{Separability}\label{subsection: separability general}
We require the technology of subgroup separability to produce examples of hyperbolic manifolds with a particular arrangement of codimension-1 submanifolds.

%Subgroup separabilty is an algebraic property that has strong geometric and topological implications. In the 1970's, Scott showed that surface groups are separable on finitely generated subgroups. Furthermore, he showed that this algebraic condition is equivalent to the fact that given an immersed curve $\gamma$ in a surface $\Sigma$, one can always find a finite-sheeted cover $\tilde{\Sigma}$ so that $\gamma$ lifts to $\tilde{\gamma}$ embedded \cite{Scott}.

%More recently, it was shown that special cube complex groups are separable on quasi-convex subgroups \cite{Haglund-Wise}, leading to the proof of the virtual Haken conjecture \cite{Agol-Groves-Manning}. 
Over the past few decades, separability has developed into an essential tool for low-dimensional geometers. There are a number of equivalent definitions for subgroup separability, we give the one that clarifies the nomenclature.

\begin{definition}[Subgroup separability]\label{definition: separability}
A subgroup $H<G$ is \textit{separable} when for all $g \in G\setminus H$, there exists a subgroup $K$ so that $H<K<G$, $g\notin K$, and the index of $K$ in $G$ is finite. $G$ is then \textit{$H$-separable}.
\end{definition}

\begin{figure}[h]
\centering
\includegraphics[width=.4 \textwidth]{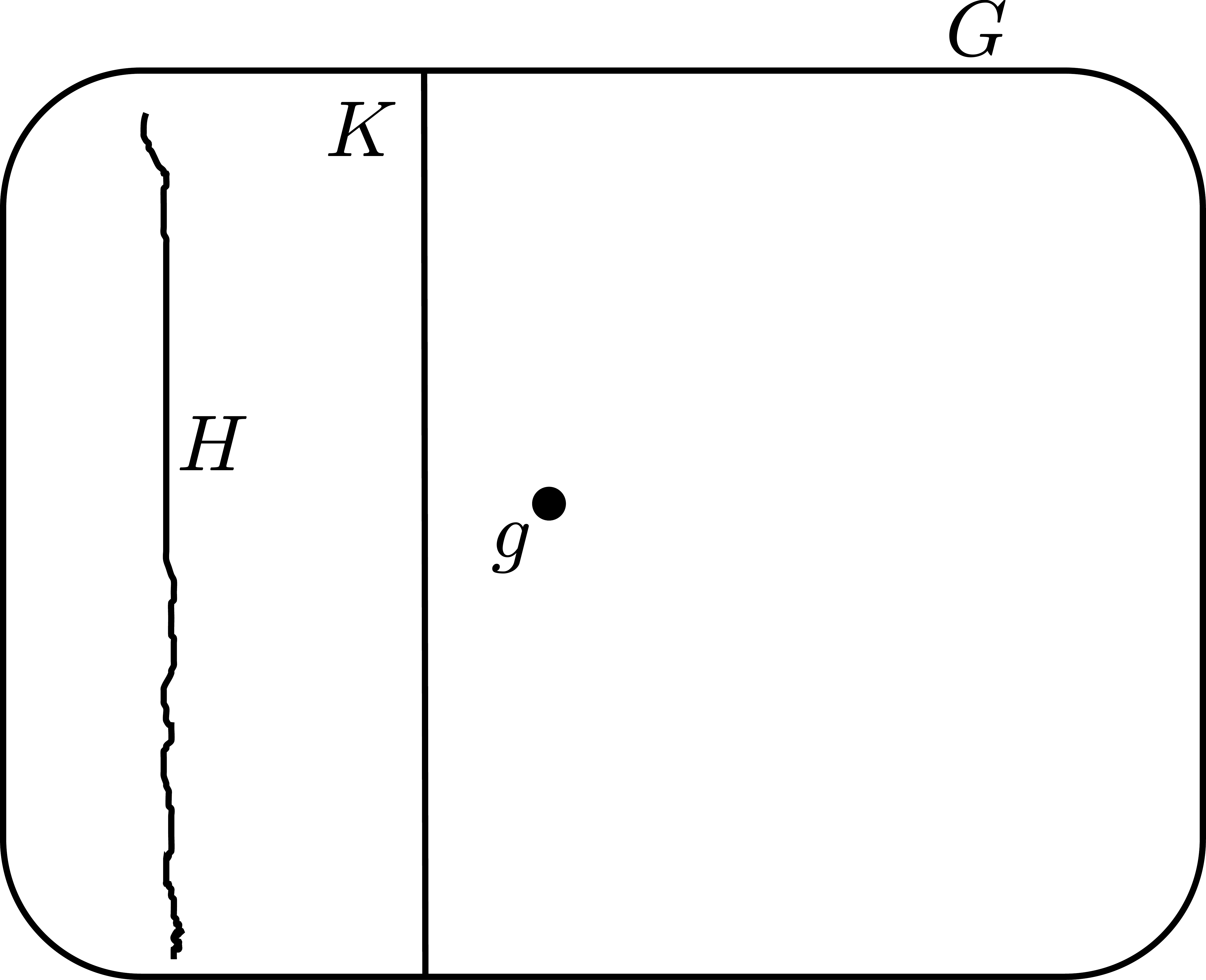}
\caption{Schematic of a group $G$ with subgroup $H$ which is separable from the element $g$ by the finite-index subgroup $K$.}
\end{figure}

A group that is 1-separable is called \textit{residually finite}, because any non-identity element can be represented nontrivially in a finite quotient. 

Many groups are separable on certain geometrically significant families of subgroups. When a group $G$ has the property that all of its subgroups with property $P$ are separable, we say that $G$ is \textit{$P$ Extended Residually Finite}. For example, Scott showed that surface groups are separable on finitely generated (local) subgroups, or Locally Extended Residually Finite (LERF) \cite{Scott} and Haglund and Wise showed that fundamental groups of compact special cube complexes are separable on quasi-convex subgroups, or Quasi-Convex Extended Residually Finite (QCERF) \cite{Haglund-Wise}.

\section{Cusp neighborhoods and groups}\label{section: cusp neighborhoods}

We briefly review the most relevant aspects of the results in Ballas, Cooper, and Leitner's paper \cite{Ballas-Cooper-Leitner}. Firstly, we borrow the following definition 

\begin{definition}\label{definition: generalized cusp}
A \textit{generalized cusp} is a properly-convex projective manifold $C=\Gamma \backslash \Omega$ with $\Gamma$ virtually abelian, $\partial C$ compact and strictly convex, and $C$ diffeomorphic to $\partial C\times\R_{\geq0}$. 
\end{definition}

Note that a usual hyperbolic cusp is a generalized cusp with this definition. Any appearance of the term \textit{cusp} refers to a generalized cusp.

Ballas, Cooper, and Leitner also contribute a classification of convex projective cusps, parametrizing the geometries of $n$-dimensional cusps with the \textit{positive closed dual Weyl chamber}
$$\mathscr{W}^n = \{\psi \in \mathrm{Hom}(\R^n,\R) ~\vert~ \psi(e_i)\geq\psi(e_{i+1})\geq0\ \forall i\}.$$

Any $\psi \in \mathscr{W}^n$ may be written as a row vector of non-increasing non-negative real numbers, $(\psi(e_1),\psi(e_2),\dots,\psi(e_n))$, and for brevity we write $\psi_i=\psi(e_i)$.

The \textit{cusp type} $t=t(\psi)$ is the greatest index $i$ in this representation so that $\psi_i>0$. For each parameter $\psi \in \mathscr{W}^n$, there is an associated model projective domain and group of automorphisms preserving that domain. Suppose that $t(\psi)<n$. Then the model domain $\Omega(\psi) \subset P(\R^{n+1})$ is foliated and given as a union of codimension one leaves

$$\Omega(\psi)=\bigsqcup_{c\geq0}\left\{\left[ (c-\sum_{i=2}^{t+1}\psi_i\log(x_i)+\frac{1}{2}\sum_{i=t+2}^n x_i^2),x_2,\dots,x_n,1\right] \bigg\vert x_2,\dots,x_{t+1} > 0 \right\}.$$

This domain is preserved (leaf-wise) by the \textit{type $t$ cusp group} (the translation subgroup of the cusp group in Ballas-Cooper-Leitner)

$$H(\psi) = \left\{\left( \begin{array}{cccc}
1 & 0 & ^t v & \sigma \\
0 & \boldsymbol{D} & 0 & 0 \\
0 & 0 & \boldsymbol{I} & v \\
0 & 0 & 0 & 1
\end{array} \right)\right\}$$
where $\boldsymbol{D}$ is a $t \times t$ diagonal matrix with positive entries $d_i$, $v$ is a vector of real numbers of length $(n-1-t)$, and $\boldsymbol{I}$ is the identity matrix of dimension $(n-1-t)$. The variable $\sigma$ in the matrix above denotes the quantity
$$\sigma = \frac{1}{2} \sum_{i=1}^{n-t-1} (v_i^2) - \sum_{j=1}^t \psi_j \log(d_j).$$ 

It is verifiable by a calculation that $H(\psi)$ preserves $\Omega(\psi)$, although it is not the full automorphism group of $\Omega(\psi)$. Let the reader be aware that we have exchanged the roles of the first and $(t+1)$st coordinates from \cite{Ballas-Cooper-Leitner}, because it will suit our needs better in Section \ref{section: iterated bending}. We may now define a $\psi$-cusp as a quotient of the model cusp neighborhood.

\begin{figure}
  \begin{minipage}[b]{0.3\textwidth}
    \includegraphics[width=\textwidth]{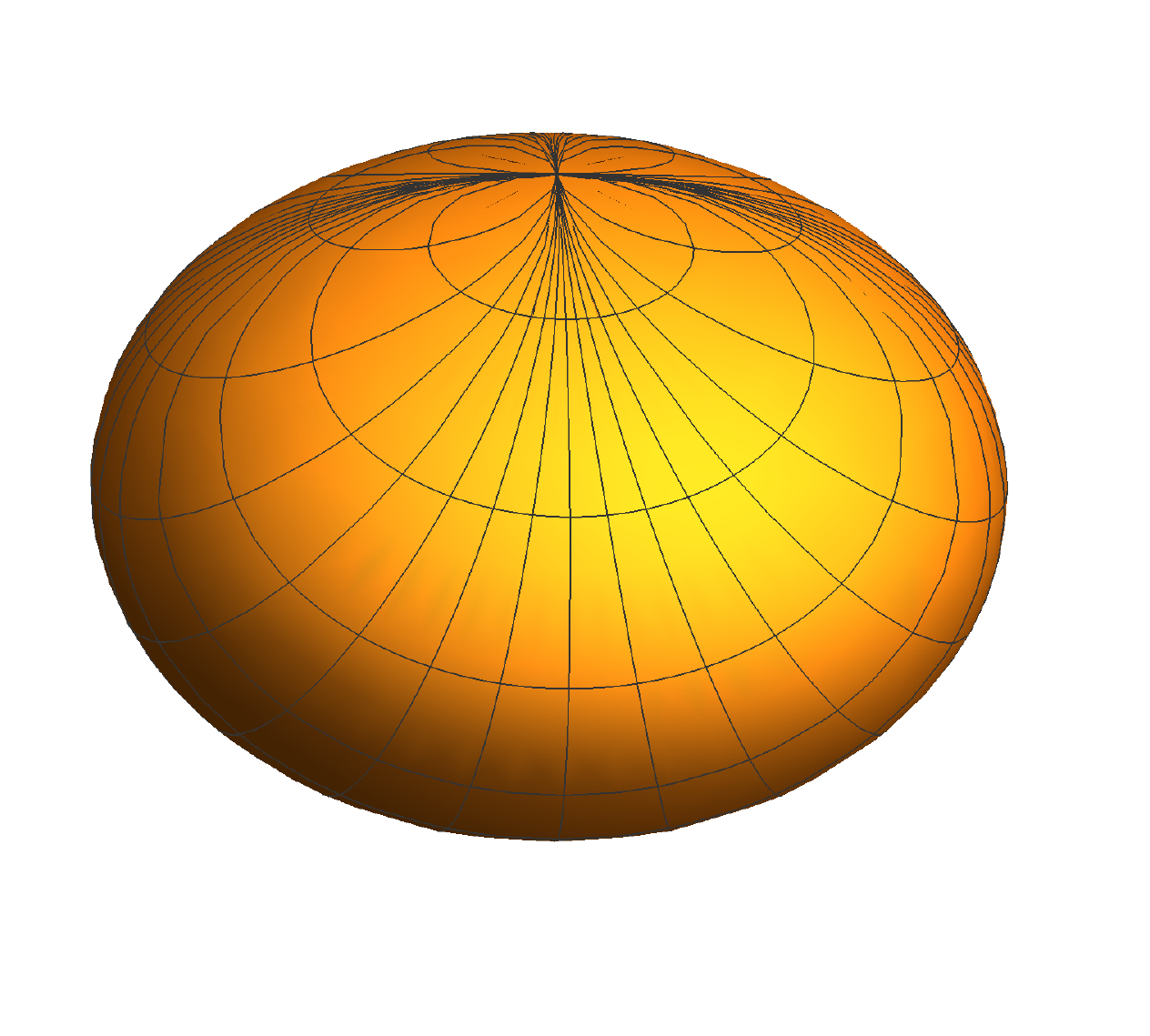}
  \end{minipage}
    \begin{minipage}[b]{0.3\textwidth}
    \includegraphics[width=\textwidth]{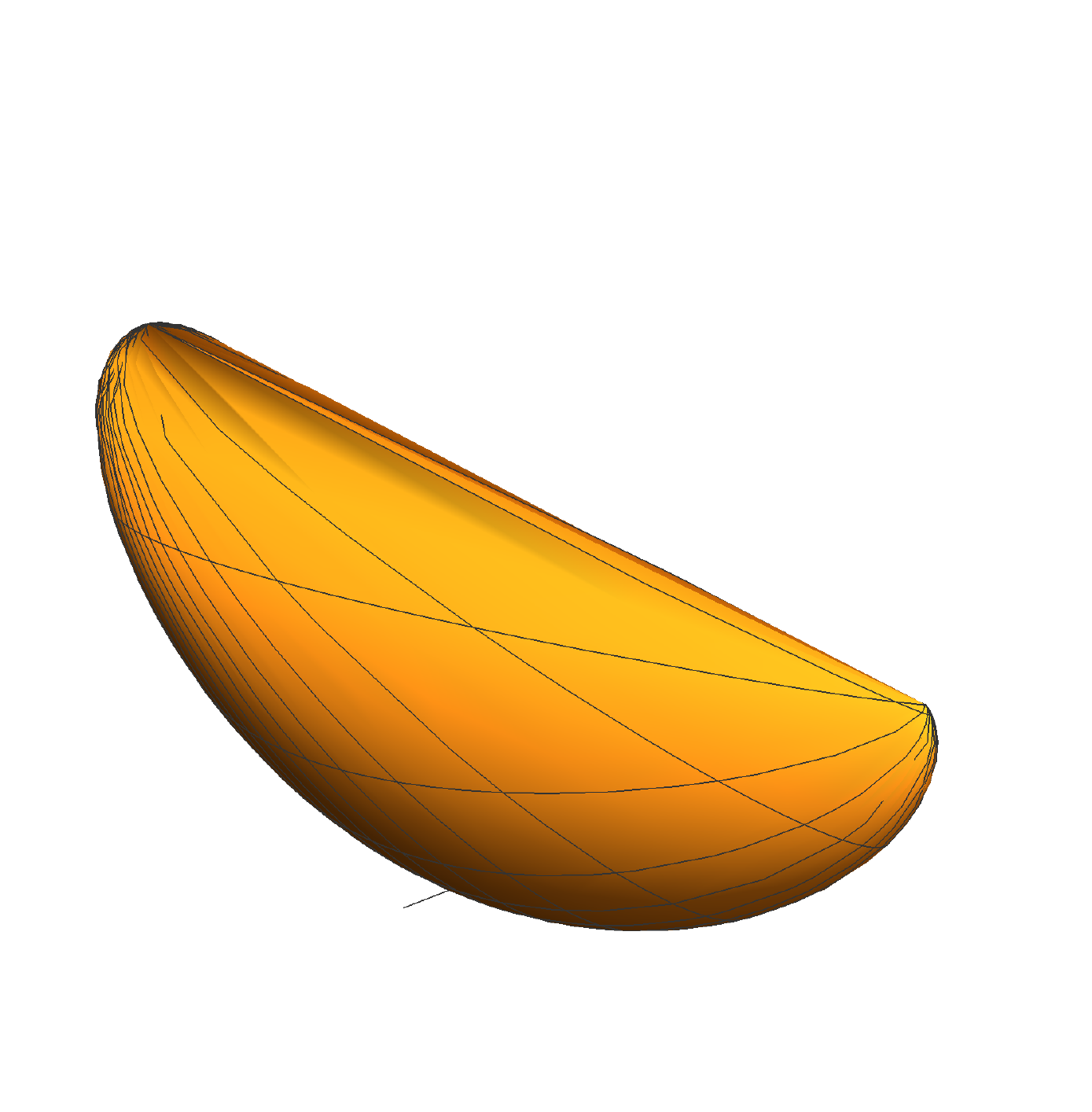}
  \end{minipage}
  \begin{minipage}[b]{0.3\textwidth}
    \includegraphics[width=\textwidth]{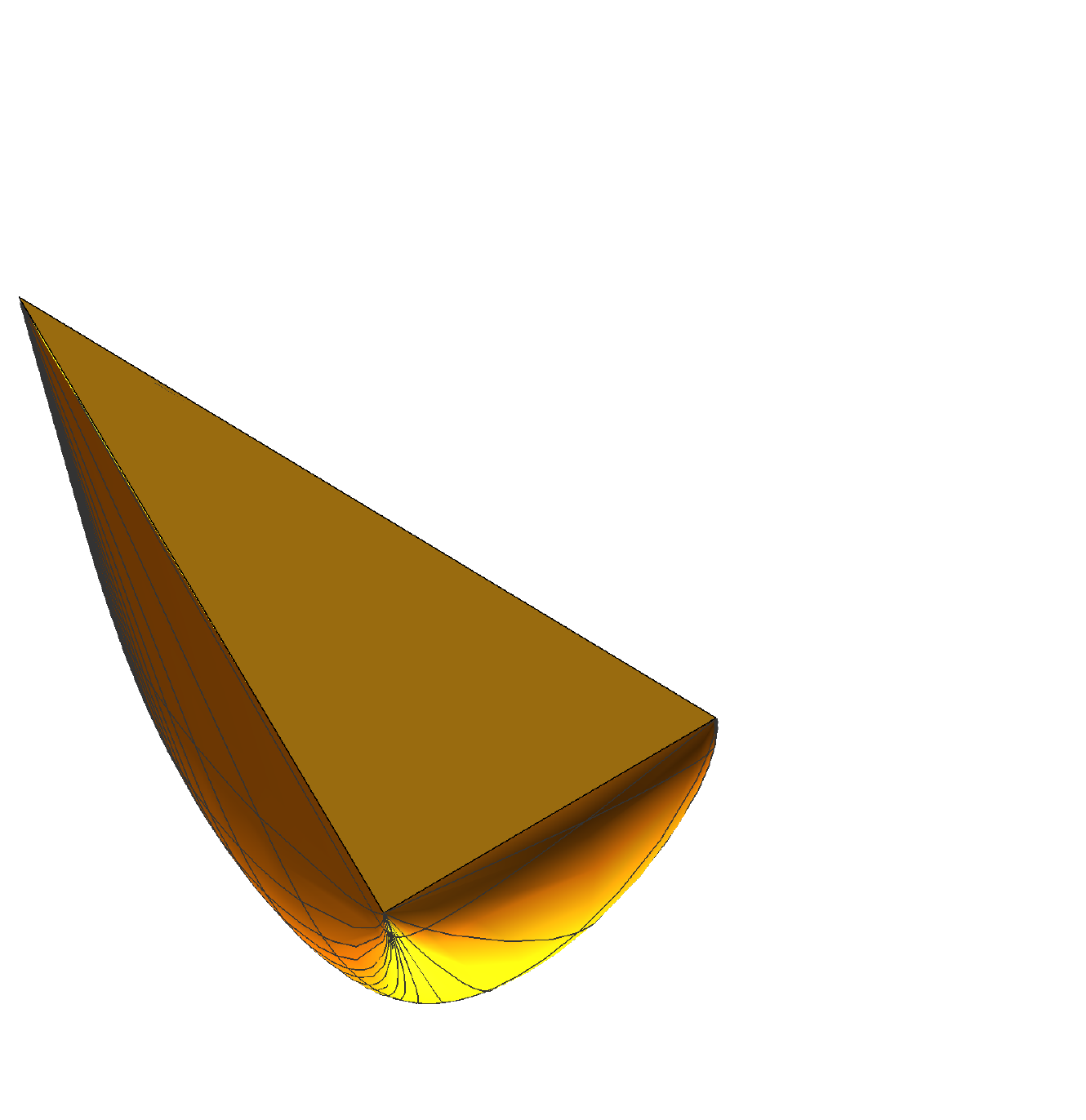}
  \end{minipage}
  \caption{Three-dimensionsal cusp domains of type 0, 1, and 2.}
\end{figure}

\begin{definition}\label{definition: cusp}
The quotient of $\Omega(\psi)$ by a lattice in $\Aut(\Omega(\psi))$ is a \textit{$\psi$-cusp} of \textit{type $t(\psi)$} with \textit{cusp parameter} $\psi$.
\end{definition}

There are generalized cusps which do not have torus cross-section, but from here forward, we only address torus cusps. As is shown by Ballas, Cooper, and Leitner, the boundary of $\Omega(\psi)$ supports a Euclidean structure, on which $\Aut(\Omega(\psi))$ acts by isometries. $H(\psi)$ is the translation subgroup of this Euclidean group, so every cusp is virtually a quotient of $\Omega(\psi)$ by a lattice in $H(\psi)$. That is, every generalized cusp virtually has torus cross-section.

Of course, there should be a notion of equivalence of cusps that allows for taking subneighborhoods that preserve the end. Intuitively, removing some compact portion of a cusp should not count as a different cusp. For a precise treatment of this equivalence, we refer the reader to \cite{Ballas-Cooper-Leitner}. For our purposes, it will be sufficient to rely on the following theorem from the same.

\begin{theorem}[Ballas-Cooper-Leitner, Theorem 0.2 \cite{Ballas-Cooper-Leitner}]\label{theorem: B-C-L}\
\begin{enumerate}
\item If $\Gamma$ and $\Gamma'$ are lattices in $H(\psi)$, the following are equivalent
\begin{itemize}
\item $\Gamma \backslash \Omega(\psi)$ and $\Gamma' \backslash \Omega(\psi)$ are equivalent as generalized cusps.
\item $\Gamma$ and $\Gamma'$ are conjugate in $\PGL(n+1,\R)$.
\item $\Gamma$ and $\Gamma'$ are conjugate in $\Aut(\Omega(\psi))$.
\end{itemize}
\item A lattice in $H(\psi)$ is conjugate in $\PGL(n+1,\R)$ into $H(\psi')$ iff $H(\psi)$ is conjugate to $H(\psi')$.
\item $H(\psi)$ is conjugate in $\PGL(n+1,\R)$ to $H(\psi')$ iff $\psi'=r \psi$ for some $r>0$.
\end{enumerate}
\end{theorem}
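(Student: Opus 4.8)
The plan is to treat the three statements as consequences of the algebraic structure of $H(\psi)$. A direct block computation shows that in the coordinates $(a,v)=(\log d_1,\dots,\log d_t,v_1,\dots,v_{n-1-t})$ the group law on $H(\psi)$ is just addition, so $H(\psi)$ is abelian and isomorphic to $\R^{n-1}$, and the only place $\psi$ enters is through the corner entry $\sigma=\frac12\langle v,v\rangle-\langle\psi,a\rangle$. I would first dispatch (3), which carries the classifying content, and then use the resulting picture of how $\psi$ sits inside $H(\psi)$, together with a syndetic-hull uniqueness statement, to obtain (2) and (1). Throughout I may assume $t<n$, as in the models written above.

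For the ``if'' half of (3) I would exhibit the conjugacy explicitly: conjugation by $g=\mathrm{diag}(\sqrt r,I_{n-1},1/\sqrt r)\in\SL(n+1,\R)$ fixes the middle block, scales the off-diagonal row block and column block of each element of $H(\psi)$ by $\sqrt r$, and scales the corner entry by $r$, which is exactly the transformation carrying $H(\psi)$ onto $H(r\psi)$. For the ``only if'' half I would pass to the Lie algebra $\mathfrak h=\mathfrak h(\psi)$ and use that $\PGL(n+1,\R)$-conjugacy preserves the Jordan decomposition in $\mathfrak{gl}(n+1,\R)$. Each $X\in\mathfrak h$ splits as a semisimple part, a diagonal matrix whose nonzero eigenvalues are the infinitesimal diagonal entries $\alpha_i$, and a nilpotent part; the eigenvalue multiset is a genuine conjugacy invariant, so the image $\mathfrak t$ of the semisimple-part map has dimension $t$ (recovering the cusp type) and is canonically identified with $\R^t$ up to permutation. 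The subspace $\mathfrak n$ of nilpotent elements of $\mathfrak h$ and the line $\ell=\mathfrak n\cdot\mathfrak n=\R E_{1,n+1}$ (its associative square) are likewise conjugation-natural, and projecting the nilpotent part onto $\ell$ along $\mathfrak n$ inside $\mathfrak n\oplus\ell$ produces a linear functional on $\mathfrak t$ which in these coordinates is exactly $-\psi$. Since $\ell$ is only a line, whose generator is rescaled under conjugation, while the eigenvalue identification is rigid, this functional is determined up to permutation and a single nonzero scalar; normalizing $\psi$ to be non-increasing and non-negative forces that scalar to be positive, giving $\psi'=r\psi$.

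Statements (2) and (1) I would reduce to the fact that a discrete group $\Gamma\cong\Z^{n-1}$ sitting as a lattice in $H(\psi)\cong\R^{n-1}$ has $H(\psi)$ as its unique syndetic hull of dimension $n-1$, so that $H(\psi)$ is recovered from $\Gamma$ in a conjugation-equivariant way. For (2), if $g\Gamma g^{-1}\subset H(\psi')$ then $g\Gamma g^{-1}$ has rank $n-1=\dim H(\psi')$ and is discrete, hence a lattice in $H(\psi')$, and uniqueness of the syndetic hull forces $gH(\psi)g^{-1}=H(\psi')$. For (1), the implication (c)$\Rightarrow$(b) is just the inclusion $\Aut(\Omega(\psi))<\PGL(n+1,\R)$. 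Given (b), the same hull uniqueness shows that any $g$ with $g\Gamma g^{-1}=\Gamma'$ normalizes $H(\psi)$; I would then upgrade $g$ to an element of $\Aut(\Omega(\psi))$ using that the $H(\psi)$-invariant properly convex domain is unique up to the cusp equivalence, which simultaneously realizes the equivalence of generalized cusps and yields (a). The reverse (a)$\Rightarrow$(c) follows by representing a cusp equivalence, after passing to subneighborhoods, by a projective map of the common model $\Omega(\psi)$, that is, by an element of $\Aut(\Omega(\psi))$ conjugating $\Gamma$ to $\Gamma'$.

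The main obstacle is the ``only if'' direction of (3): isolating a complete conjugacy invariant that pins down $\psi$ up to scale, i.e. verifying that no conjugation can mix the rigid eigenvalue data $\alpha$ with the corner term in a way that alters the ratios $\psi_1:\dots:\psi_t$. Making the projection onto $\ell$ canonical, and handling the degenerate ranges $t=0$ and $t=n-1$ where $\mathfrak t$ or $\mathfrak n$ collapses (in the latter $\nu(\mathfrak h)=\ell$ directly), is where the care lies. For (1) the corresponding difficulty is the rigidity upgrade from $\PGL(n+1,\R)$-conjugacy to $\Aut(\Omega(\psi))$-conjugacy, which rests on uniqueness of the invariant domain and is the geometric heart of the Ballas--Cooper--Leitner classification.
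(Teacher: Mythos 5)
This statement is quoted verbatim from Ballas--Cooper--Leitner (their Theorem 0.2) and is not proved anywhere in the present paper -- the author explicitly records only the portion of that theorem needed later -- so there is no in-paper proof to compare yours against. Judged on its own terms, your sketch is a reasonable reconstruction of the algebraic half of the result. The computation that $H(\psi)$ is abelian and isomorphic to $\R^{n-1}$ in the coordinates $(\log d, v)$ is correct, the explicit conjugation by $\mathrm{diag}(\sqrt r, I, 1/\sqrt r)$ does carry $H(\psi)$ to $H(r\psi)$, and the Lie-algebra invariant you propose for the converse of (3) -- the eigenvalue data together with the functional obtained by projecting nilpotent parts onto the line generated by products of nilpotent elements -- is a sound way to pin down $\psi$ up to permutation and a positive scalar, provided you really verify that this line and the projection are conjugation-natural and treat the degenerate cases $t=0$ and $t=n-1$ separately, as you note. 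The syndetic-hull argument for (2) also works, since every element of $H(\psi)$ has positive real spectrum and hence a unique, conjugation-equivariant real logarithm, so $H(\psi)=\exp(\mathrm{span}_{\R}\log\Gamma)$ is recovered from any lattice $\Gamma$.

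The genuine gap is in (1). Knowing that a conjugator $g$ normalizes $H(\psi)$ does not place $g$ in $\Aut(\Omega(\psi))$: the normalizer of $H(\psi)$ is strictly larger, and it acts on the whole family of $H(\psi)$-invariant properly convex domains rather than preserving a fixed one. Your appeal to ``uniqueness of the invariant domain up to cusp equivalence,'' and likewise the claim that every equivalence of generalized cusps is induced by a projective map of the common model, are exactly the nontrivial geometric content of the Ballas--Cooper--Leitner classification (it requires their precise definition of equivalence of generalized cusps, which this paper also only cites). As written, your proof of (1) assumes the substance of what is to be shown; the algebraic parts (2) and (3) are the only pieces your outline genuinely establishes.
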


The theorem in \cite{Ballas-Cooper-Leitner} says more than this, but we record here only what is necessary for this paper. 
%Note that the parameter $\psi$ is not analogous to the cusp shape from hyperbolic geometry, as it does not identify a lattice in $\Aut(\Omega(\psi))$.

A main result of Ballas, Cooper, and Leitner is the following uniformization theorem.

\begin{theorem}[Ballas-Cooper-Leitner, Theorem 0.1 \cite{Ballas-Cooper-Leitner}]\label{theorem: B-C-L uniformization}\
Every generalized cusp is equivalent to a $\psi$-cusp.
\end{theorem}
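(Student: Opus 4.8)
The plan is to classify generalized cusps by analyzing the holonomy $\Gamma < \Aut(\Omega)$ directly and then matching it to one of the model groups $H(\psi)$. First I would pass to a finite-index subgroup so that $\Gamma \cong \Z^{n-1}$ is free abelian; this is harmless because the equivalence class of the cusp is detected by commensurability data, and part (1) of Theorem \ref{theorem: B-C-L} lets one descend the conclusion back to $\Gamma$ once its linear structure is understood. Let $G$ denote the connected abelian Lie subgroup of $\PGL(n+1,\R)$ generated by a finite-index subgroup of $\Gamma$ (equivalently, the identity component of its Zariski closure). Since $\partial C$ is a closed aspherical manifold with virtually abelian $\pi_1$ and $\Gamma$ acts on it cocompactly, $G$ is an $(n-1)$-dimensional connected abelian group inside which $\Gamma$ sits as a cocompact lattice. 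It therefore suffices to determine which properly convex domains admit such a $G$ and to put the pair $(\Omega, G)$ into a normal form.

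The second step is linear-algebraic. Lifting to $\GL(n+1,\R)$, the elements of $G$ commute, so they share a generalized-eigenspace decomposition of $\R^{n+1}$ and can be simultaneously brought to real Jordan form. Because $G$ is connected and abelian, the Jordan decomposition $g = g_s g_u$ is compatible across the whole group, and $g \mapsto g_s$, $g \mapsto g_u$ define homomorphisms from $G$ onto commuting semisimple and unipotent subgroups. Connectedness rules out sign changes in the semisimple eigenvalues, and I would rule out the remaining compact/rotational part using the cusp geometry: a non-real eigenvalue pair produces elliptic rotation, which is incompatible with $\Gamma$ fixing a single boundary point $p$, admitting no invariant line meeting $\mathring\Omega$, and acting properly with flat-torus cross-section. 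This leaves a splitting $\R^{n+1} = \ell \oplus V_D \oplus V_I \oplus \ell'$ into the fixed cusp direction $\ell = \R p$, a semisimple block $V_D$ on which $G$ acts by positive diagonal matrices $\boldsymbol{D}$, a unipotent block $V_I$ carrying the parabolic shears $v$, and the dual homogenizing line $\ell'$ — precisely the block pattern of $H(\psi)$.

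The geometric heart of the argument is to show that the only properly convex domain with compact strictly convex quotient boundary invariant under such a $G$ is $\Omega(\psi)$ for the appropriate $\psi$. Here I would exploit the foliation of $\Omega$ by $G$-orbits: each orbit is the graph of a $G$-invariant convex function over the coordinates $(x_2,\dots,x_n)$, invariance under the diagonal flow forces logarithmic dependence of the form $-\sum \psi_i \log x_i$ in the $V_D$-directions, and invariance under the unipotent shears forces the quadratic $\tfrac12 \sum x_i^2$ in the $V_I$-directions, reproducing the defining function of $\Omega(\psi)$. Strict convexity of $\partial C$ together with cocompactness pins down the leaf exactly and excludes degenerate or non-proper competitors. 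The weights $\psi_i$ are then read off as logarithms of the eigenvalues of $\boldsymbol{D}$ (equivalently, translation lengths of $G$ in the Hilbert metric along the diagonal directions), rescaled and reordered to be non-increasing so that $\psi \in \mathscr{W}^n$. This identifies $\Omega \cong \Omega(\psi)$ with $\Gamma$ virtually a lattice in $H(\psi) < \Aut(\Omega(\psi))$, and Theorem \ref{theorem: B-C-L}(1) upgrades the finite-index statement to $\Gamma$ itself, exhibiting $C$ as a $\psi$-cusp.

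I expect the main obstacle to be the uniqueness claim in the third step: showing that a $G$-invariant properly convex domain whose quotient boundary is compact and strictly convex must equal $\Omega(\psi)$ exactly, rather than merely being asymptotic to it. The linear normal form constrains only the asymptotic cone of $\Omega$ at the cusp point, so controlling the precise shape of the invariant hypersurface — verifying that strict convexity persists and that no additional invariant convex set intervenes — is where the genuine convex-geometric work lies.
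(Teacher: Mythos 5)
The paper does not prove this statement: Theorem~\ref{theorem: B-C-L uniformization} is quoted verbatim from Ballas--Cooper--Leitner \cite{Ballas-Cooper-Leitner} and used as a black box, so there is no in-paper proof to compare against. Your proposal is therefore an attempt to reprove the main classification theorem of that (full-length) paper. Judged on its own terms it is a reasonable outline of the strategy BCL actually follow --- pass to a translation subgroup, put a connected abelian hull $G$ containing $\Gamma$ into simultaneous normal form, and identify the invariant domain --- but it has genuine gaps exactly where the real work happens.

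Concretely: (1) You use, without justification, that $\Gamma$ fixes a single point $p \in \partial\Omega$ and that $\R^{n+1}$ splits as $\ell \oplus V_D \oplus V_I \oplus \ell'$ with $\ell = \R p$. Definition~\ref{definition: generalized cusp} only assumes $\Gamma$ is virtually abelian, $\partial C$ is compact and strictly convex, and $C \cong \partial C \times \R_{\geq 0}$; producing the fixed boundary point and the invariant supporting hyperplane is a substantive part of the BCL argument, not a hypothesis you may assume. (2) The existence of a connected abelian Lie group $G$ containing a finite-index subgroup of $\Gamma$ as a cocompact lattice (a syndetic hull) is not automatic for a discrete linear group; it requires first controlling compact factors of the Zariski closure, which you attempt to do using the fixed point from (1), so the argument is circular as ordered. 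Moreover, the claim that non-real eigenvalue pairs are incompatible with cusp geometry proves too much: the paper itself records that $\Aut(\Omega(\psi))$ acts on the boundary by Euclidean isometries, so elliptic rotational parts genuinely occur in cusp holonomy and only disappear after passing to the translation subgroup $H(\psi)$. (3) The step you correctly identify as the heart --- that the only $G$-invariant properly convex domain with compact, strictly convex quotient boundary is $\Omega(\psi)$ itself, rather than something merely asymptotic to it --- is asserted, not argued. Since you flag this yourself, the proposal should be read as a plan with its decisive step still open, not as a proof.
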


Hence, we will treat the notion of a generalized cusp and a $\psi$-cusp as interchangeable.

\section{Iterated bending}\label{section:iterated bending}

\subsection{Bending representations}\label{subsection: what is bending}

Bending is an algebraic construction motivated by geometric structures. For a review of bending techniques in the context of convex projective geometry, we refer the reader to \cite{Ballas-Marquis} and \cite{Marquis}. This technique was first described in a general context by Johnson and Millson \cite{Johnson-Millson}, though the idea is due to Thurston. Throughout we assume basic understanding of $(G,X)$-structures, for a reference see \cite{Thurston}.

Suppose that $\rho_0:\Gamma\rightarrow G$ is a representation of some discrete group $\Gamma$ into a Lie group $G$, and that $\Gamma = \Gamma_1 *_S \Gamma_2$ is a free product with amalgamation. When the centralizer $C(\rho_0(S))<G$ is non-trivial, we may then deform $\rho_0$ in the following way. Choose a path $c_t$ into $C(\rho_0(S))$ with $c_0=1$. Let

$$\rho_t(\gamma)=
   \begin{cases} 
      \rho_0(\gamma) & \gamma\in\Gamma_1 \\
      c_t\rho(\gamma)c_t^{-1} & \gamma\in\Gamma_2 
   \end{cases}.
$$

Because $\Gamma$ is a free product with amalgamation, any element of $\Gamma$ can be written as a product of elements from $\Gamma_1$ and $\Gamma_2$. Hence, the piecewise definition of $\rho_t$ extends to all of $\Gamma$.

The fact that $c_t$ centralizes $\rho_0(S)$ ensures that $\rho_t$ is a well-defined representation. That is, the only ambiguity in the above piecewise definition is when $\gamma \in S$, but the two definitions agree on $S$.

Similarly, if $\Gamma= *_g (\Gamma')$ is an HNN extension, then we may define $\rho_t(\gamma)=\rho_0(\gamma)$ when $\gamma\in \Gamma'$ and $\rho_t(g)=c_t\rho_0(g)$ for the stable letter $g$.

When a manifold, $M$, has an embedded codimension-$1$ submanifold $\Sigma$ the fundamental group $\pi_1M$ decomposes either as a free product with amalgamation (when $\Sigma$ is separating) or as an HNN extension (when $\Sigma$ is non-separating). Given a representation $\rho_0$ of $\pi_1M$ into some Lie group we get a path of representations after choosing a path $c_t$ in $C(\rho_0(\pi_1\Sigma))$ by the previous construction. We refer to this path of representations as the \textit{bending of $\rho_0$ along $\Sigma$ with bending parameter $t$} after resolving any ambiguity regarding the path of centralizing elements. For the construction in this paper, the centralizer in question is one-dimensional, so no such ambiguity will arise (save reparametrizing this one-dimensional subgroup).

Note that both free products with amalgamation and HNN extensions are examples of graph of groups decompositions.
It is possible to discuss bending along many submanifolds simultaneously by describing the fundamental group as a graph of groups, and checking a cocycle condition on the intersections of the submanifolds \cite{Bart-Scannell}, \cite{Johnson-Millson}. However, we will be discussing bending deformations that are well-behaved with respect to one another, and hence we will not require a great deal of technology.

The classical example that engenders the name ``bending'' is that of a surface group representation into $\mathrm{Isom}(\mathbb{H}^2)$ bending into $\mathrm{Isom}(\mathbb{H}^3)$ along a simple closed curve. This gives a  family of non-trivial quasi-Fuchsian groups (for small elements of the centralizer). This example, as well as a detailed study of bending deformations and their relationship to graph of groups decompositions were first described by Thurston, and then later by Johnson and Millson \cite{Johnson-Millson}.

When bending a representation, the developing map may also be deformed in the following way. Fix a point $x_0\in \tilde{M}$. Define $dev_t(x_0)=dev_0(x_0)$. For any other point $x\in\tilde{M}$, choose a path $p:I\rightarrow \tilde{M}$ from $x_0$ to $x$. Generically, the pre-image of $\Sigma$ in $I$ is an oriented collection of points. The developing map of this path is adjusted by multiplying by an appropriate conjugate of $c_t$ at each point on $I$. The resulting developing map $dev_t$ is $\rho_t$-equivariant.

Let us suppose that we have chosen a codimension-1 submanifold in $M$ a manifold, some representation $\rho:\pi_1 M\rightarrow G$ where $G$ is some Lie group, and some non-trivial path $c_t$ in the centralizer $C(\rho_0(\pi_1\Sigma)))$. We use the notation $\rho_{(\Sigma,c_t)}$ or $\rho_{(\Sigma,t)}$ to mean the bent representation and $dev_{(\Sigma,c_t)}$ or $dev_{(\Sigma,t)}$ the associated developing map. When the parameter $t$ or the submanifold $\Sigma$ are clear from context, one or the other may be omitted from the subscript.

It is not true in all cases that arbitrary bending of the holonomy induces a geometrically nice structure on the associated developing map. We will show in Section \ref{section: main theorem} that the bending deformations we construct give convex projective structures using results from \cite{Cooper-Long-Tillmann}

\subsection{Iterated bending}\label{section: iterated bending} 

Suppose that $M$ is a manifold with two connected totally geodesic codimension-1 submanifolds $\Sigma_1$ and $\Sigma_2$, and that $\rho_0:\pi_1M \rightarrow G$ is a representation to some Lie group, $G$. Suppose also that we have chosen paths in the centralizer of $\rho_0(\pi_1\Sigma_i)$, parametrized as $c_t$ and $d_s$, respectively. Then we have the following.

\begin{lemma}\label{lemma: bending commutes}
If $c_t$ and $d_s$ commute, then $(\rho_{(\Sigma_1,t)})_{(\Sigma_2,s)}=(\rho_{(\Sigma_2,s)})_{(\Sigma_1,t)}$, and $(dev_{(\Sigma_1,t)})_{(\Sigma_2,s)}=(dev_{(\Sigma_2,s)})_{(\Sigma_1,t)}$. In this case, we will say that the two bending deformations commute.
\end{lemma}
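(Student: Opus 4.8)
The plan is to prove the statement first at the level of holonomy representations, and then transfer it to developing maps using $\rho_t$-equivariance together with the path-crossing description of $dev_t$ recalled in Section \ref{subsection: what is bending}. For the representations I would work with the graph-of-groups decomposition of $\pi_1 M$ obtained by cutting along $\Sigma_1 \cup \Sigma_2$: this expresses $\pi_1 M$ as a graph of groups whose edge groups are conjugates of $\pi_1\Sigma_1$ and $\pi_1\Sigma_2$ and whose vertex groups are the fundamental groups of the complementary pieces. Fixing a base vertex $v_0$ (the piece containing the image of $x_0$), a single bending $\rho_{(\Sigma_i,\cdot)}$ acts on a vertex group $G_v$ by conjugating $\rho_0|_{G_v}$ by the product of the relevant centralizer elements read along the edge-path from $v_0$ to $v$.

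First I would check that the iterated construction is well defined in both orders, and this is already where the hypothesis enters. To bend $\rho_{(\Sigma_1,t)}$ along $\Sigma_2$ one needs $d_s$ to centralize $\rho_{(\Sigma_1,t)}(\pi_1\Sigma_2)$. Since $\rho_{(\Sigma_1,t)}$ differs from $\rho_0$ on the $\pi_1\Sigma_2$ factor only by conjugation by $c_t$, and $d_s$ centralizes $\rho_0(\pi_1\Sigma_2)$, the computation $d_s(c_t X c_t^{-1})d_s^{-1} = c_t(d_s X d_s^{-1})c_t^{-1} = c_t X c_t^{-1}$, valid because $c_t d_s = d_s c_t$, shows $d_s$ still centralizes the bent image. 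The symmetric statement holds in the other order, so both iterates exist.

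Next I would compute both iterates on each vertex group. On a vertex group $G_v$ whose edge-path from $v_0$ crosses $\Sigma_1$-edges and $\Sigma_2$-edges, the doubly-bent representation conjugates $\rho_0|_{G_v}$ by a word in $c_t$ and $d_s$, and the order of the two bendings determines only the relative order of the $c_t$- and $d_s$-letters in that word. Because $c_t$ and $d_s$ commute, every such word is independent of this ordering, so $(\rho_{(\Sigma_1,t)})_{(\Sigma_2,s)}$ and $(\rho_{(\Sigma_2,s)})_{(\Sigma_1,t)}$ agree on each vertex group; as the vertex groups generate $\pi_1 M$ and both maps are homomorphisms, they agree on all of $\pi_1 M$. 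For the developing maps I would then use that $dev_{(\Sigma,t)}$ is obtained from $dev_0$ by inserting the appropriate conjugate of the centralizer element at each transverse wall-crossing along a path from $x_0$. Along a fixed path both iterated developing maps insert the same family of $c_t$- and $d_s$-factors, differing only in the order in which a $\Sigma_1$-factor and a $\Sigma_2$-factor are multiplied; commuting again forces the two products to coincide, and equality on a representative of each complementary region yields $(dev_{(\Sigma_1,t)})_{(\Sigma_2,s)} = (dev_{(\Sigma_2,s)})_{(\Sigma_1,t)}$.

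The step I expect to be the main obstacle is the careful bookkeeping of the conjugating elements, that is, verifying that commutativity of the bare paths $c_t$ and $d_s$ really does license interchanging the two families of insertions everywhere they appear. The delicate point is that, a priori, the element inserted at a $\Sigma_2$-crossing after a first bending along $\Sigma_1$ is a $c_t$-conjugate of $d_s$ rather than $d_s$ itself; I would need to track these conjugations through the graph of groups (equivalently, along each path in $\tilde M$) and confirm that the commutation relation propagates, using that the submanifolds are disjoint so that their walls do not interleave within a single complementary piece. Once this combinatorial accounting is pinned down, both equalities reduce to the single relation $c_t d_s = d_s c_t$ exactly as indicated above.
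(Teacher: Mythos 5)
Your proposal is correct and follows essentially the same route as the paper: decompose $\pi_1 M$ along $\Sigma_1\cup\Sigma_2$, use commutativity of $c_t$ and $d_s$ to check that the second bending is still defined (the centralizer condition survives the first bending), and then observe that both orders conjugate each complementary piece's group by the same word in the commuting elements $c_t$ and $d_s$. Your graph-of-groups packaging is exactly the streamlining the paper itself says would be possible, and your explicit treatment of the developing maps via wall-crossings fills in a half of the statement the paper's written proof leaves implicit; the only slip is the remark that the submanifolds are ``disjoint'' (the relevant case is precisely when they intersect), but this does not affect the argument since the $c_t$-conjugate of $d_s$ is $d_s$ itself.
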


For two arbitrary submanifolds, it is not clear that the second bending is defined: $d_s$ may not be in the centralizer of $\rho_{(\Sigma_1,t)}(\pi_1(\Sigma_2))$.

\begin{proof}
We address the case where both $\Sigma_1$ and $\Sigma_2$ are separating and intersect. The cases where one or both are non-separating or do not intersect follow similarly, with changes to the decomposition of the group $\Gamma$.

Let $A= \Sigma_1\cap\Sigma_2$, so that both $c_t$ and $d_s$ are paths in $C(\rho_0(\pi_1(A)))$. The key fact is that the first deformation does not change the centralizer of the second submanifold group. To see this, write $\pi_1(\Sigma_1)= H_1 *_{\pi_1(A)} H_2$, so that $\rho_{(\Sigma_2,s)}(\pi_1(\Sigma_1))$ is got by extending the piecewise definition:

$$\rho_{(\Sigma_2,s)}(\gamma))=
   \begin{cases} 
      \rho_0(\gamma) & \gamma\in H_1 \\
      d_s\rho_0(\gamma)d_s^{-1} & \gamma\in H_2 
   \end{cases}.
$$

The path $c_t$ commutes with every element from $\rho_0(H_1)$ and $\rho_0(H_2)$ as well as with $d_s$, and is hence in $C(\rho_{(\Sigma_2,s)}(\pi_1(\Sigma_1)))$ as claimed. Symmetrically, $d_s \in C(\rho_{(\Sigma_1,t)}(\pi_1(\Sigma_2)))$. This demonstrates that both iterated operations are defined. We need now to show that they are equal on all of $\pi_1(M)$.

\begin{figure}[h]
\centering
\includegraphics[width=.5 \textwidth]{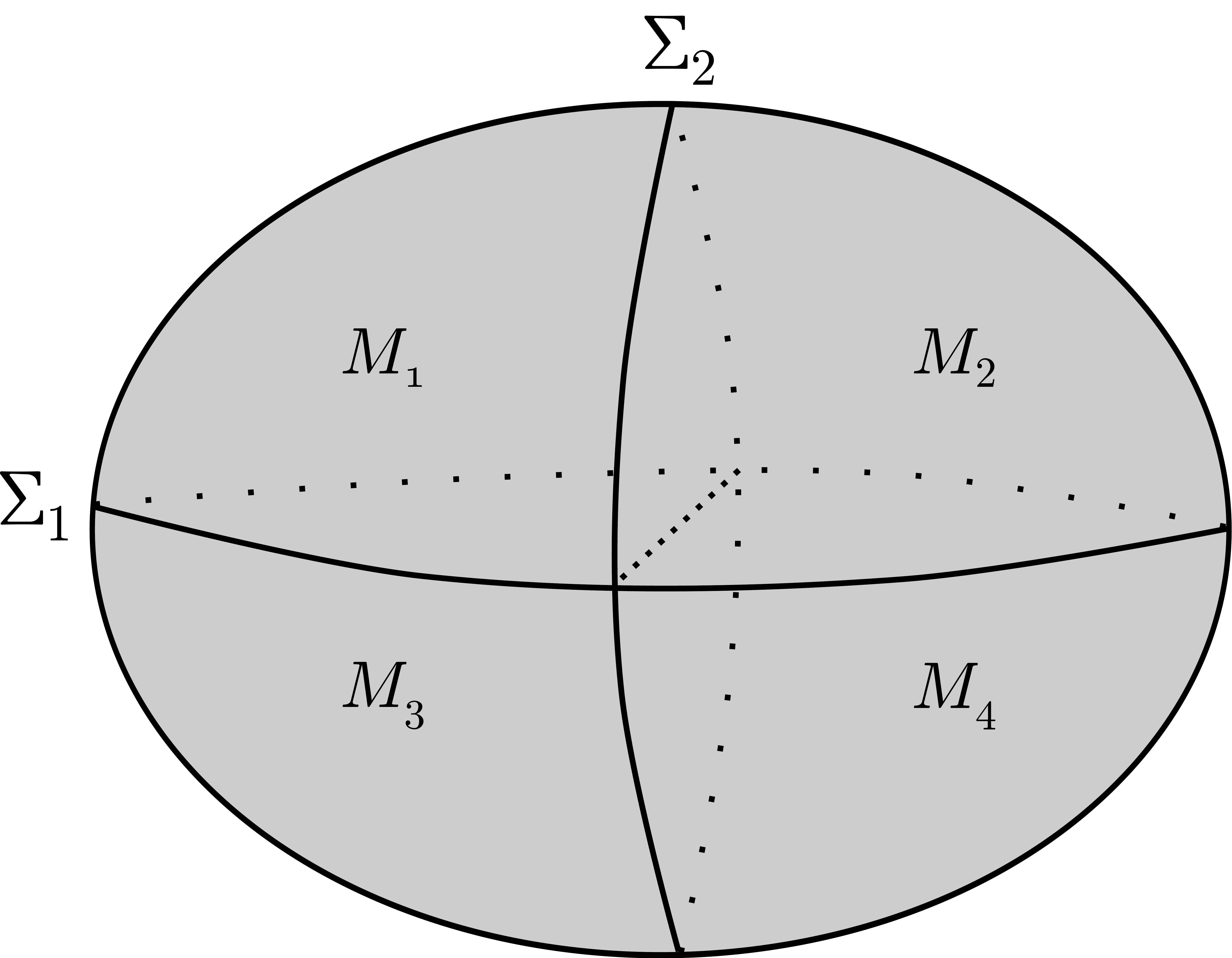}
\caption{Schematic of a manifold with two separating submanifolds.}
\end{figure}

Let $\Gamma = \pi_1(M)$. Under the assumption that $\Sigma_1$ and $\Sigma_2$ are separating and intersect, we see that $M \setminus (\Sigma_1\cup\Sigma_2)$ has four components, $M_1,\dots, M_4$. Choose the indexing so that $M\setminus \Sigma_1$ has components containing one each of $M_1\cup M_2$ and $M_3 \cup M_4$, while $M\setminus \Sigma_2$ has components containing one each of $M_1 \cup M_3$ and $M_2 \cup M_4$. Let $\Gamma_i=\pi_1(M_i)$, and note the decomposition of $\Sigma_i$ over $A$ by $\pi_1(\Sigma_1)=H_1*_{\pi_1(A)}H_2$ and $\pi_1(\Sigma_2)=K_1*_{\pi_1(A)}K_2$.

So, the group $\Gamma$ has two relevant decompositions: 
$$\Gamma= (\Gamma_1*_{K_1} \Gamma_2)*_{\pi_1(\Sigma_1)}(\Gamma_3*_{K_2} \Gamma_4)$$
and
$$\Gamma= (\Gamma_1*_{H_1} \Gamma_3)*_{\pi_1(\Sigma_2)}(\Gamma_2*_{H_2} \Gamma_4).$$

Proving that $(\rho_{(\Sigma_1,t)})_{(\Sigma_2,s)}=(\rho_{(\Sigma_2,s)})_{(\Sigma_1,t)}$ is only a matter of writing down the two pairs of piecewise definitions and noting their equality. Commutitivity is necessary, for if $\gamma\in\Gamma_4$, then $(\rho_{(\Sigma_2,s)})_{(\Sigma_1,t)}(\gamma)=c_t d_s \rho_0(\gamma) d_s^{-1} c_t^{-1}$ and $(\rho_{(\Sigma_1,t)})_{(\Sigma_2,s)}(\gamma) =d_s c_t \rho_0(\gamma) c_t^{-1} d_s^{-1}$ which are equal if $c_t$ and $d_s$ commute.
\end{proof}

Essentially, bending is just conjugating or multiplying certain elements of the group by elements in the centralizers. If the centralizing elements commute, it does not matter in which order they are multiplied.

The commutitivity condition required should not be considered generic. When two totally geodesic codimension-1 submanifolds $\Sigma_1$ and $\Sigma_2$ intersect generically and bending is performed along $\Sigma_1$, the other does not remain totally geodesic. Instead, it becomes piecewise totally geodesic with some bending locus $A$, a codimension-2 submanifold. The group $\pi_1(\Sigma_2)$ is decomposed as a free product over $\pi_1(A)$, and the restriction of the bending deformation of $M$ to $\Sigma_2$ is also a bending deformation (algebraically). Generically, for convex projective manifolds, this is a deformation into $\mathrm{PGL}(n+1,\R)$, but demanding commutativity of centralizers as described above guarantees that the bent representation of $\pi_1\Sigma_2$ remains in a copy of $\mathrm{PGL}(n,\R) < \mathrm{PGL}(n+1,\R)$.

With a proper understanding of graphs of groups decompositions, the above proof could probably be simplified considerably. However, a complete exposition on graphs of groups would be orthogonal to the purpose of this paper.

The previous lemma ensures that the following notion is well-defined.

\begin{definition}\label{def: iterated bending}
Let $M$ be a manifold and $\rho:\pi_1(M) \rightarrow G$ be a representation of its fundamental group into some Lie group. Suppose that $M$ has a collection of codimension-$1$ submanifolds, $\{\Sigma_i\}_{i=1}^k$, and that for each
 $\Sigma_i$ we choose a path $c_{s_i}$ in the centralizer $C(\rho(\pi_1(\Sigma_i))) < G$. Let 
$\mathcal{S}= \{(\Sigma_i,c_{s_i})\}_{i=1}^k$ be the set of pairs of submanifolds and paths. 
Given that the centralizing paths commute, we define $\rho_\mathcal{S}=\rho_{(\Sigma_1, c_{s_1})\dots(\Sigma_k,c_{s_k})}$ 
and refer to it as the \textit{bent representation} or the \textit{bending of} $\rho$ \textit{along} $\mathcal{S}$.

\end{definition}

\section{Bending convex projective domains}

\subsection{The paraboloid model of $\mathbb{H}^n$}\label{section: paraboloid model}

For algebraic analysis, we will use the following model for hyperbolic space. We will find $\mathbb{H}^n$ as the negative cone of a signature $(n,1)$ form on $\R^{n+1}$, but not the standard form. The advantage of this model is that one of the basis vectors for $\R^{n+1}$ lies on its boundary, lending a desirable form to the parabolic subgroup stabilizing it.

Let 

$$Q_n =  \left( \begin{array}{ccc}
0 & 0 & -1 \\
0 & I & 0 \\
-1 & 0 & 0 
\end{array} \right)$$
where $I$ represents an identity matrix of dimension $n-1$. The quadratic form $x \mapsto (^t x Q_n x)$ has signature $(n,1)$ on $\R^{n+1}$. 

Let us define $\mathbb{H}^n = \{[x]\in P(\R^{n+1}) \vert (^t x Q_n x) < 0\}$. The first basis vector, $e_1$ is in $\partial \mathbb{H}^n$, and the translational part of its stabilizer in the identity component of $\textrm{Isom}(\mathbb{H}^n)$ is upper triangular and of the form

$$H=\left( \begin{array}{ccc}
1 & ^tv & \sigma \\
0 & I & v \\
0 & 0 & 1 \\
\end{array} \right)$$
where $I$ is an identity matrix, $v = (v_2,\dots,v_n)$ is a vector of length $n-1$ and $\sigma = \frac{1}{2} \sum_{i=2}^n v_i^2$. Note that in the terminology from Section \ref{section: cusp neighborhoods} this group is exactly $H(\psi)$, when $\psi_i=0$ for all $i$.

\subsection{Centralizers of relevant groups}\label{section: centralizers of relevant groups}

We are interested in bending hyperbolic manifolds along codimension-$1$ geodesic submanifolds. Up to translation, the holonomy of the submanifold lies in a copy of $\SO(n-1,1)$ embedded reducibly in $\SO(n,1)$. In particular, consider the hyperplane $\Pi_2=\ker (e_2^*) \cap \mathbb{H}^n$. It is not difficult to show that the identity component of the centralizer of the copy of $\SO(n-1,1)$ in $\PGL(n+1,\R)$ stabilizing $\Pi_2$ (using the paraboloid model of $\mathbb{H}^n$) is

$$C_0(\SO(n-1,1)) = \left( \begin{array}{ccccc}
1 & 0 & 0 & 0 & 0 \\
0 & e^t & 0 & 0 & 0\\
0 & 0 & 1 & 0 & 0 \\
0 & 0 & 0 & \ddots & 0 \\
0 & 0 & 0 & 0 & 1 \\
\end{array} \right).$$

Centralizers of other $\Pi_i=\ker (e_i^*) \cap \mathbb{H}^n$ are similar, as they are conjugate, and the reader is directed to Ballas and Marquis for the proof \cite{Ballas-Marquis}.

\section{Analysis of bending cusp neighborhoods}\label{section: bending examples}
To understand the behavior of the geometry at a cusp under bending, we extend arguments from Ballas and Marquis \cite{Ballas-Marquis}

\subsection{Bending a hyperbolic cusp}\label{section: local bending}

Let us analyze an important, but highly non-generic example. We require the following technical definition about codimension-1 submanifolds in cusped hyperbolic manifolds.

\begin{definition}
A submanifold $\Sigma$ in a hyperbolic manifold $M$ with a cusp $P\cong B\times \R_{\geq 0}$ \textit{essentially intersects} $P$ if given any cusp subneighborhood $P_c = B\times [c,\infty)$, $P_c \cap \Sigma \neq \emptyset$.
\end{definition}

We will also say that $P$ and $\Sigma$ intersect \textit{essentially}, the idea being that $\Sigma$ ``goes out the cusp.''

Suppose that $M$ is an oriented hyperbolic $n$-manifold with a torus cusp $P \cong T^{n-1}\times \R_{\geq 0}$. Let us also suppose that $M$ has a collection of codimension-$1$ totally geodesic submanifolds and that paths have been chosen in their centralizers, so that $\mathcal{S}=\{(\Sigma_i, c_{s_i})\}_{i=2}^{n}$ has the following properties:
\begin{enumerate}
\item Each $\Sigma_i$ is embedded in $M$
\item All pairs $\Sigma_i$, $\Sigma_j$ are orthogonal
\item Each $\Sigma_i$ essentially intersects $P$
\item All pairs $\Sigma_i$, $\Sigma_j$ intersect non-trivially inside of $P$
\item For each $i$, the intersection $\Sigma_i \cap P$ has only one component.
\end{enumerate}

This is a non-generic arrangement of submanifolds, but we will guarantee that such example exists for each $n$ in section \ref{section: arithmetic manifolds}. It is an exercise in hyperbolic geometry to see that the orthogonality condition implies that centralizers of these submanifolds commute. Hence, iterated bending along this family is well-defined.

The final property ensures that each $\Sigma_i\in \mathcal{S}$ does not meet $P$ in a parallel collection of cusps. The second and fourth property together ensure that the intersection of all $\Sigma_i\in \mathcal{S}$ inside of $P$ is a single curve.

These observations allow us to choose useful coordinates on $\tilde{M}\cong \mathbb{H}^n$ (in the paraboloid model). In particular we may ensure that $P$ is covered by a horoball neighborhood of the point $[e_1]$, and that each $\Sigma_i\in\mathcal{S}$ is covered by a hyperplane of the form $\Pi_i=e_i^* \cap \mathbb{H}^n$, hence the unusual indexing of the submanifolds.

We now analyze the conjugacy class of the 
cusp group under iterated bending along $\mathcal{S}$.

\begin{lemma}\label{lemma: local bending}
Given the above arrangement, $dev_\mathcal{S}(P)$ is a type $t$ cusp if and only if exactly $t$ of the bending parameters $s_i$ are non-zero. Furthermore, as the parameters $\{s_i\}_{i=2}^n$ vary over $\R^{n-1}$, the cusp parameter $\psi = (\psi_2,\dots,\psi_n,0)$ where for $i>1$, $\psi_i$ is given by the formula $(\frac{b_i^2(e^{s_i}+1)}{2(e^{s_i}-1)(s_i)})$, where $b_i>0$ is a constant. 
%This formula is up to reordering so that $\psi_i\geq \psi_{i+1}$.
\end{lemma}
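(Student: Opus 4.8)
The plan is to reduce Lemma \ref{lemma: local bending} to an explicit computation with the holonomy of $P$, and then to read off the cusp type and parameter from the conjugacy data using Theorem \ref{theorem: B-C-L} and the uniformization Theorem \ref{theorem: B-C-L uniformization}. First I would record the bent cusp group. Bending along $\Sigma_i$ is performed with a centralizing element conjugate to the one displayed in Section \ref{section: centralizers of relevant groups}, namely the diagonal matrix $c_{s_i}$ that scales the $i$-th coordinate by $e^{s_i}$ and fixes the others; these matrices are simultaneously diagonal, so they commute and Lemma \ref{lemma: bending commutes} makes the iterated bending well-defined. In the chosen paraboloid coordinates I would fix adapted generators $\tau_2,\dots,\tau_n$ of the cusp lattice $\Gamma_P<H$, taking $\tau_i$ to be the parabolic translation of length $b_i$ in the $e_i$-direction. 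Using hypotheses (2)--(5) --- orthogonality, essential and mutual intersection inside $P$, and connectedness of each $\Sigma_i\cap P$ --- I would verify the crossing pattern: $\tau_i$ meets $\Sigma_i$ exactly once, while for $j\neq i$ the translation $\tau_j$ preserves the hyperplane $\Pi_i=\ker(e_i^*)\cap\mathbb{H}^n$ and commutes with $c_{s_i}$. Hence bending along $\Sigma_i$ replaces $\tau_i$ by $c_{s_i}\tau_i$ and fixes every other generator, so the bent cusp group is $\tilde\Gamma_P=\langle \tilde\tau_2,\dots,\tilde\tau_n\rangle$ with $\tilde\tau_i=c_{s_i}\tau_i$.

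Restricted to the coordinates $(e_1,e_i,e_{n+1})$ this generator is
\[
\tilde\tau_i=c_{s_i}\tau_i=\begin{pmatrix}1 & b_i & \tfrac12 b_i^2\\ 0 & e^{s_i} & e^{s_i}b_i\\ 0 & 0 & 1\end{pmatrix},
\]
with eigenvalues $1$ (of multiplicity $n$ in the full matrix) and $e^{s_i}$. Thus $\tilde\tau_i$ is parabolic exactly when $s_i=0$ and acquires a nontrivial diagonalizable direction exactly when $s_i\neq 0$; in the latter case one checks that the eigenvalue $1$ retains a single $2\times2$ Jordan block, which is precisely the shape of a diagonal-block generator of a type-$t$ cusp group. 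This already shows that the number of non-parabolic generators equals the number of nonzero $s_i$, which is the candidate type $t$.

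The heart of the proof is to produce a \emph{single} $g\in\PGL(n+1,\R)$ fixing $[e_1]$ and $[e_{n+1}]$ that simultaneously places all of $\tilde\Gamma_P$ into the standard form $H(\psi)$ of Section \ref{section: cusp neighborhoods}. I would take $g$ unipotent and use its $(1,i)$ entries to absorb the $e^{s_i}$-eigenvector of $\tilde\tau_i$ into the $i$-th coordinate axis, thereby clearing the coupling between the $i$-th coordinate and the coordinates $1$ and $n+1$. Because $\tilde\tau_i$ acts trivially on each axis $e_j$ with $j\neq i$, the various diagonalizable blocks decouple, so one $g$ standardizes them all at once and leaves the parabolic generators as pure translations. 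After conjugation each diagonalizable generator becomes a $\mathrm{diag}$-type matrix with $d_i=e^{s_i}$ together with a single entry $\sigma_i$ in the $(1,n+1)$ slot, so $g\tilde\Gamma_P g^{-1}$ is visibly a lattice in $H(\psi)$.

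Finally I would extract $\psi$. Matching the normalized diagonalizable generator with the definition of $H(\psi)$, the $(1,n+1)$ entry satisfies $\sigma_i=-\psi_i\log d_i=-\psi_i s_i$, and this entry is computed from the generalized eigenvector $w$ of $\tilde\tau_i$ for the eigenvalue $1$, i.e. the solution of $(\tilde\tau_i-I)w=e_1$. A direct calculation gives its last coordinate $w_3=-2(e^{s_i}-1)/\!\left(b_i^2(e^{s_i}+1)\right)$, whence $\psi_i s_i=-1/w_3=b_i^2(e^{s_i}+1)/\!\left(2(e^{s_i}-1)\right)$ and so $\psi_i=b_i^2(e^{s_i}+1)/\!\left(2(e^{s_i}-1)s_i\right)$, the stated formula (note $\psi_i>0$ for every $s_i\neq0$ and $\psi_i=0$ when $s_i=0$). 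By Theorem \ref{theorem: B-C-L}(2)--(3) this conjugacy class determines the parameter up to positive scaling and fixes the type $t(\psi)$ as the number of nonzero $\psi_i$, hence of nonzero $s_i$, and Theorem \ref{theorem: B-C-L uniformization} identifies $dev_\mathcal{S}(P)$ with the corresponding $\psi$-cusp. The main obstacle I expect is the simultaneous conjugation of the previous paragraph: one must confirm that a single $g$ standardizes all diagonalizable directions --- which share the two coordinates $e_1$ and $e_{n+1}$ --- without perturbing the translation directions, and that the outcome is a genuine lattice in $H(\psi)$ rather than merely a group whose individual generators lie there; establishing the crossing pattern from hypotheses (2)--(5) in the first paragraph is the secondary delicate point.
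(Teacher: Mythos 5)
Your proposal is correct and follows essentially the same route as the paper: identify the bent generators $c_{s_i}\tau_i$ using the fact that $\tau_j$ lies in $\pi_1(\Sigma_i)$ for $j\neq i$, conjugate the whole cusp group by a single unipotent matrix fixing $[e_1]$ into the standard form $H(\psi)$, and read off $\psi_i$ from the $(1,n+1)$ entry via $\sigma=-\psi_i s_i$. The only cosmetic difference is that you extract that entry from the generalized eigenvector equation $(\tilde\tau_i-I)w=e_1$, whereas the paper writes down the conjugating matrix $A$ explicitly and computes $A\rho_{\mathcal S}(\gamma_i)A^{-1}$ directly; both yield the same formula.
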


\begin{proof}
Assume that the $s_i$ are indexed such that the first $t$ are non-zero, while the remaining are zero.

Let $P$ be centered at $e_1$ in the paraboloid model. In these coordinates, we have a generating set $\{\gamma_i\}_{i=2}^n$ for the cusp group, where
$$ \rho_0(\gamma_i)=
\left(\begin{array}{c c c}

1 & b_i(^t e_{i-1}) & \frac{b_i^2}{2}\\

0 & I & b_ie_{i-1} \\ 

0 & 0 & 1
\end{array}\right).$$

Here, $e_i$ is the $i$th basis vector of length $n-1$. The $b_i$ are some positive constants (determining the cusp shape). The fact that such a generating set exists is equivalent to saying that the cusp shape of $P$ is rectangular, which is guaranteed by the submanifolds in $\mathcal{S}$.

Note that $\gamma_i\in\pi_1(\Sigma_j)$ whenever $i\neq j$. Hence, bending along $\Sigma_j$ leaves  $\gamma_i$ fixed whenever $i \neq j$. In other words, for each $i$,

$$\rho_\mathcal{S}(\gamma_i)=\rho_{(\Sigma_i,s_i)}(\gamma_i)=\mathrm{diag}(1,\dots,1,e^{s_i},1,\dots,1)\rho_0(\gamma_i)$$

where the non-unit entry appears in the $i$th position.

We wish to determine which of the cusp domains described in section \ref{section: cusp neighborhoods} is stabilized by the group generated by $\{\rho_\mathcal{S}(\gamma_i)\}_{i=2}^n$. To do so, we will perform a change-of-basis by the matrix $A$, defined below. The motivation is that $\rho_\mathcal{S}(\gamma_i)$ ceases to be unipotent when $s_i\neq0$. That is, a new eigenvector appears in the plane spanned by $\{e_1,e_i\}$. To put the bent group in standard form, we must take this new eigenvector to $e_i$ while leaving $e_1$ fixed, and do the same in the dual (in the plane spanned by $\{e_1^*,e_i^*\}$). The matrix $A$ does exactly this simultaneously for each $i$. 

Conjugate $\rho_\mathcal{S}(\gamma_i)$ by the matrix

$$A= 
\left(\begin{array}{c c c c}

1 & ^t (\frac{-b_j}{e^{s_j}-1})_{j=2}^{t+1} & 0 & 0\\

0 & I & 0 & (\frac{-b_j}{e^{s_j}-1}e^{s_j})_{j=2}^{t+1}\\ 

0 & 0 & I & 0\\

0 & 0 & 0 & 1
\end{array}\right).$$

For $i=2,\dots, t$, this results in

$$\gamma_i'=A\rho_\mathcal{S}(\gamma_i) A^{-1} =
\left(\begin{array}{c c c c c}

1 & 0 & 0 & 0 & \frac{-b_i^2(e^{s_i}+1)}{2(e^{s_i}-1)}\\

0 & I & 0 & 0 & 0 \\

0 & 0 & e^{s_i} & 0 & 0 \\ 

0 & 0 & 0 & I & 0\\

0 & 0 & 0 & 0 & 1
\end{array}\right)$$

where the non-zero diagonal entry appears in the $(i,i)$ position. The remaining $\gamma_j$ are fixed by this conjugation, so for $j=t+1,\dots,n$ let $\gamma_j'=\rho_0(\gamma_j)$.

Note that the sets $P_c$ in $P(\R^{n+1})$ given by

$$P_c = \left\{\left[\left(c-\sum_{i=2}^{t+1} a_i\log(x_i) + \frac{1}{2}\sum_{i=t+2}^{n} x_i^2\right),x_2,\dots,x_{n},1\right]\right\}$$

with 

$$a_i=\frac{b_i^2(e^{s_i}+1)}{2(e^{s_i}-1)(s_i)}$$

are preserved by all $\gamma_i'$. Hence, $\rho_\mathcal{S}(\pi_1 P)$ is a subgroup of $H(\psi)$, and by Theorem 0.2 of \cite{Ballas-Cooper-Leitner}, $dev_\mathcal{S}(P)$ is equivalent to a type-$t$ cusp corresponding to $\psi=(a_1,\dots,a_t,0,\dots,0)$ (after reordering the first $t$ coordinates so that $a_i$ are non-increasing).

\end{proof}

\begin{remark}\label{remark: goofy topology}
It should be noted that the map $\R_{\geq0}^{n-1} \rightarrow \mathscr{W}^n$ which sends 
$$(s_1,\dots,s_n) \mapsto \psi=(a_1,\dots a_k,0,\dots,0)$$
is continuous, except when the type changes. This discontinuity can be rectified by instead taking 
$$(s_1,\dots,s_n) \mapsto (a_1^{-1},\dots a_k^{-1},0,\dots,0),$$
suggesting that the type parameters for generalized cusps should be inverted. This is known to Ballas, Cooper, and Leitner, and should be reflected in forthcoming literature.
\end{remark}

\section{Examples from Arithmetic Manifolds}\label{section: arithmetic manifolds}

The previous sections shows that cusp type may be varied by bending along a family of submanifolds with special intersection requirements. However, it is not obvious that there exist complete finite-volume hyperbolic $n$-manifolds that have the appropriate submanifolds to perform the iterated bending described. Demonstrating the existence of such examples is the purpose of this section. 

The main tool we will use is the separability properties of arithmetic hyperbolic manifolds.  A general reference for arithmetic hyperbolic lattices is Morris' book \cite{Morris}. For the reader wholly unfamiliar with arithmetic lattices and manifolds, it will suffice to know that these are a subclass of cusped and compact hyperbolic manifolds appearing in every dimension. Let us recall the following theorem from \cite{Agol-Long-Reid}, \cite{Bergeron-Haglund-Wise}.

\begin{theorem}[Corollary 1.12, Bergeron-Haglund-Wise \cite{Bergeron-Haglund-Wise}]\label{theorem: GFERF}
Arithmetic hyperbolic orbifold and manifold groups of simplest type and with finite covolume are separable on geometrically finite subgroups.
\end{theorem}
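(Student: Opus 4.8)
The plan is to obtain separability of geometrically finite subgroups from the theory of special cube complexes, organizing the argument into a reduction step, a cubulation step, and a specialness step. Write $\Gamma$ for the lattice. The first step is purely coarse-geometric: when $\Gamma$ is cocompact it is word-hyperbolic and its geometrically finite subgroups are exactly the quasiconvex ones, while when $\Gamma$ has cusps it is hyperbolic relative to its peripheral (cusp) subgroups and its geometrically finite subgroups are exactly the relatively quasiconvex ones. Consequently it suffices to show that $\Gamma$ is \emph{(relatively) QCERF}, i.e.\ that relatively quasiconvex subgroups are separable; by the Haglund--Wise theorem for compact special cube complexes together with its relatively hyperbolic refinement, this follows once $\Gamma$ is realized as a virtually special group acting properly and cofinitely on a $\mathrm{CAT}(0)$ cube complex compatibly with its relative hyperbolicity.

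The geometric heart of the matter is the abundance of totally geodesic hypersurfaces in manifolds of simplest type. Since $\Gamma$ is commensurable with the integer points of $\SO(q)$ for a quadratic form $q$ of signature $(n,1)$ over a totally real field, each rational vector $w$ with $q(w)>0$ determines a rational hyperplane $w^{\perp}$ meeting $\mathbb{H}^n$ in a totally geodesic $\mathbb{H}^{n-1}$, whose stabilizer in $\Gamma$ is itself an arithmetic lattice of simplest type in $\SO(n-1,1)$. I would take the $\Gamma$-orbits of a suitable finite family of such hypersurfaces as the walls of a wallspace and form the associated Sageev dual cube complex. One must check that these walls separate points and cross one another transversally often enough that the dual action is proper and cofinite; in the cusped case this is arranged within the Hruska--Wise framework for cubulating relatively hyperbolic groups, with the peripheral subgroups acting on finitely many orbits of crossing walls.

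For virtual specialness there are two routes. The classical one, used by Bergeron, Haglund, and Wise, verifies the Haglund--Wise local conditions --- absence of self-intersecting, one-sided, self-osculating, and inter-osculating hyperplanes --- after passing to a finite cover, the key ingredient being separability (indeed double-coset separability) of the geometric hypersurface stabilizers. Since those stabilizers are again simplest-type lattices one dimension down, their separability is the genuinely arithmetic input: I would establish it by detecting the rational condition ``preserve the line $\R w$'' in congruence quotients, using reduction modulo primes and strong approximation, which also lends the whole result its inductive flavor. Alternatively, once the proper cofinite cubulation is in place, virtual specialness follows formally from the Agol--Wise theory (Agol's theorem in the hyperbolic case, Wise's special quotient theorem and its relative form in the cusped case), bypassing the hand-verification entirely.

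I expect the main obstacle to be the cubulation itself, specifically guaranteeing that the chosen totally geodesic hypersurfaces force a \emph{proper} and \emph{cofinite} action on the Sageev complex. This is a quantitative statement about how rational hyperplanes of a simplest-type form must intersect, and it is most delicate near the cusps, where one has to control the peripheral structure so that the relatively hyperbolic machinery applies. By comparison, the opening reduction of geometric finiteness to relative quasiconvexity and the closing invocation of relative QCERF are comparatively formal, and the arithmetic separability of the geometric subgroups --- while essential in the classical route --- is a self-contained number-theoretic lemma amenable to induction on dimension.
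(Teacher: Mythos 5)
The paper does not prove this statement at all: it is imported as a black box, attributed to Corollary 1.12 of Bergeron--Haglund--Wise (with Agol--Long--Reid also cited), and is then used in Section 7 purely as a tool for separating geometrically finite hypersurface stabilizers from finitely many bad deck transformations. So there is no in-paper argument to compare yours against; the only honest ``proof'' the paper offers is the citation.

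That said, your outline is a faithful reconstruction of how the result is actually established in the cited literature: reduce geometric finiteness to (relative) quasiconvexity, cubulate using the $\Gamma$-orbits of rational totally geodesic hypersurfaces $w^{\perp}$ with $q(w)>0$, prove virtual specialness either by verifying the Haglund--Wise pathologies are killed in a finite cover (which is where the genuinely arithmetic input --- double-coset separability of the hypersurface stabilizers, detected in congruence quotients --- enters) or by appealing to the later Agol--Wise machinery, and then quote quasiconvex separability for virtually special groups. Two caveats worth registering. First, your sketch defers all of the real work to large external theorems (Sageev duality, properness and cofiniteness of the dual action, Hruska--Wise relative cubulation, Haglund--Wise canonical completion and retraction), so as written it is a road map rather than a proof; the properness of the action on the dual cube complex and the separability of the geometric subgroups are precisely the points a referee would ask you to supply. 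Second, in the non-cocompact case relevant to this paper the form is defined over $\Q$ rather than a general totally real field, and the identification of geometrically finite with relatively quasiconvex subgroups is itself a theorem (of Hruska) that should be cited rather than asserted. Neither caveat is a gap in the strategy; both are gaps in the write-up.
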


This property has the name ``geometrically finite extended residually finite'' (GFERF). It will be the key to constructing examples of hyperbolic manifolds where we may apply iterated bending to achieve deformed cusps.

We will require two additional separability properties in order to precisely control behavior of submanifolds at cusps.

\begin{theorem}[Theorem 1.3, McReynolds \cite{McReynolds}]\label{theorem: PERF}
If $\Gamma < \mathrm{Isom}(\mathbb{H}^n)$ is an arithmetic lattice and $v \in \partial \mathbb{H}^n$, then $\mathrm{stab}(v)\cap \Gamma$ is separable in $\Gamma$.
\end{theorem}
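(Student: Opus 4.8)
The plan is to reduce the statement to the case where $v$ is a parabolic (cusp) fixed point and then to exploit the arithmetic structure of $\Gamma$ directly through congruence reductions, rather than through any cubulation machinery. First I would dispose of the easy cases. The stabilizer $\mathrm{stab}(v)\cap\Gamma$, for an arbitrary $v\in\partial\mathbb{H}^n$, is $\Gamma\cap\mathbf{P}$ where $\mathbf{P}=\mathrm{stab}_{\mathbf{G}}(v)$ is the minimal parabolic of $\mathbf{G}=\mathrm{Isom}(\mathbb{H}^n)^{\circ}$ fixing $v$. If this intersection contains no parabolic element, then it is either finite or a virtually cyclic elementary group (generated by loxodromics sharing the axis through $v$, together with finite rotation); in either case it is geometrically finite, hence separable by residual finiteness or by the GFERF property of Theorem~\ref{theorem: GFERF}. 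This leaves the essential case, where $\mathrm{stab}(v)\cap\Gamma$ contains a parabolic, so that $v$ is a bounded parabolic fixed point and $H=\mathrm{stab}(v)\cap\Gamma$ is the cusp group, virtually isomorphic to $\Z^{n-1}$.

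To treat this case I would pass to the algebraic description of the lattice. Up to commensurability and conjugation, $\Gamma$ lies in $\mathbf{G}(\mathcal{O})$, where $\mathbf{G}$ is a semisimple algebraic group defined over a number field $k$ with ring of integers $\mathcal{O}$, and $\SO(n,1)$ is a factor of the group of real points of $\mathbf{G}$. The crucial geometric input is the \emph{rationality of cusps}: a point $v$ is a parabolic fixed point of an arithmetic lattice exactly when its stabilizer $\mathbf{P}=\mathrm{stab}_{\mathbf{G}}(v)$ is a parabolic subgroup defined over $k$, and in that case $H=\Gamma\cap\mathbf{P}(k)$. Since $\mathbf{P}$ is then a $k$-subvariety of $\mathbf{G}$, it is cut out inside the matrix entries by polynomials with coefficients in $\mathcal{O}$.

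With this in hand the separation is a short congruence argument. Given $g\in\Gamma\setminus H$, the element $g$ does not fix $v$, so $g\notin\mathbf{P}$, and hence some defining polynomial $f$ of $\mathbf{P}$ satisfies $f(g)\neq 0$. Choose a prime ideal $\mathcal{Q}\subset\mathcal{O}$ of good reduction at which $g$ is integral and $f(g)$ is a unit; all but finitely many primes qualify. Let $\phi\colon\Gamma\to\mathbf{G}(\mathcal{O}/\mathcal{Q})$ be the reduction homomorphism and set $K=\phi^{-1}(\phi(H))$. Then $H\le K$, the index $[\Gamma:K]=[\phi(\Gamma):\phi(H)]$ is finite, and $\phi(H)\subseteq\mathbf{P}(\mathcal{O}/\mathcal{Q})$ while $\phi(g)\notin\mathbf{P}(\mathcal{O}/\mathcal{Q})$ because $f(g)\not\equiv 0\pmod{\mathcal{Q}}$. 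Therefore $g\notin K$, which is exactly what separability requires. I would emphasize that the failure of the congruence subgroup property for $\SO(n,1)$ is irrelevant here: $K$ need only be \emph{some} finite-index subgroup, and it is permitted to contain a principal congruence subgroup.

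The main obstacle is not the separation step, which is elementary once the setup is in place, but rather the rationality input, namely that $\mathbf{P}$ is defined over $k$. This rests on the fact that $\Gamma\cap\mathbf{P}$ is a lattice in the abelian unipotent radical of $\mathbf{P}$, which forces $\mathbf{P}$ to be a rational parabolic (Borel's theory of rational boundary components and reduction theory). The remaining points are bookkeeping: reducing from $\Gamma$ to $\mathbf{G}(\mathcal{O})$ using the standard lemmas that separability passes between a group and its finite-index subgroups, and verifying that $H$ is commensurable with the integer points of the unipotent radical so that $H\subseteq\mathbf{P}(\mathcal{O})$ and the matrix-entry argument applies. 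I expect these to be routine relative to the rationality input.
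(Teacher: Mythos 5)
This theorem is not proved in the paper at all: it is quoted from McReynolds and used as a black box (only ever applied to cusp stabilizers, in Lemma~\ref{lemma: single intersections at cusp}), so there is no internal proof to compare against. Your argument in the essential case --- $v$ a parabolic fixed point --- is in substance the argument of the cited source: identify $H=\mathrm{stab}(v)\cap\Gamma$ with $\Gamma\cap\mathbf{P}$ for a parabolic subgroup $\mathbf{P}$ defined over the field of definition $k$ (rationality of cusps via reduction theory), note that $\mathbf{P}$ is Zariski-closed and cut out by polynomials with coefficients in $\mathcal{O}$, and separate any $g\in\Gamma\setminus H$ by reducing modulo a prime ideal at which some defining polynomial of $\mathbf{P}$ evaluated at $g$ stays nonzero. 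The congruence step is correct as written --- since $f(g)\in\mathcal{O}\setminus\{0\}$ lies in only finitely many primes, almost every reduction works, $\phi(H)$ lands in the reduction of the vanishing locus while $\phi(g)$ does not, and $K=\phi^{-1}(\phi(H))$ separates --- and the commensurability and restriction-of-scalars bookkeeping is genuinely routine. You are also right that the congruence subgroup property plays no role.

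The one soft spot is your disposal of the non-parabolic case. Theorem~\ref{theorem: GFERF}, as quoted, applies only to arithmetic lattices \emph{of simplest type}, whereas the statement is for arbitrary arithmetic lattices in $\mathrm{Isom}(\mathbb{H}^n)$; for the quaternionic (non-simplest-type) families GFERF is not available, so you cannot invoke it there. The case does not need it, though, and your own machinery already covers it: if $\mathrm{stab}(v)\cap\Gamma$ contains a loxodromic $\gamma$, then $v$ is an eigenline of $\gamma\in\GL_{n+1}(k)$, hence defined over a finite extension $k'$ of $k$, so $\mathbf{P}=\mathrm{stab}_{\mathbf{G}}(v)$ is a $k'$-closed subgroup and the identical congruence argument (now over $\mathcal{O}_{k'}$) separates $\Gamma\cap\mathbf{P}$; and if the stabilizer is finite, separability follows from residual finiteness alone by killing the finitely many elements $gh^{-1}$, $h\in H$, in a single finite quotient. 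With that substitution the proof is complete and matches the known one.
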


This will allow us to ensure a submanifold does not intersect a cusp more than once. The following is folk-lore, but was proved in \cite{McReynolds-Reid-Stover}.

\begin{theorem}[Proposition 3.1, McReynolds-Reid-Stover \cite{McReynolds-Reid-Stover}]\label{theorem: torus cusps}
Every finite volume hyperbolic $n$-manifold $M$ has a finite cover $M'$ so that $M'$ has only torus cusps and has at least two cusps.
\end{theorem}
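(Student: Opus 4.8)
The plan is to handle the two requirements—that all cusps be tori, and that there be at least two of them—in sequence, first producing a finite cover with only torus cusps and then passing to a further cover that splits a cusp. Throughout I assume $M$ is non-compact (a closed hyperbolic manifold has no cusps, so the assertion ``at least two cusps'' is only meaningful, and is only intended, for cusped $M$). Write $\Gamma = \pi_1(M) < \mathrm{Isom}(\mathbb{H}^n)$, and let $v_1,\dots,v_k$ represent the $\Gamma$-conjugacy classes of parabolic fixed points, with cusp groups $\Gamma_j = \mathrm{stab}(v_j)\cap\Gamma$. Each $\Gamma_j$ is virtually abelian: conjugating $v_j$ to infinity identifies $\Gamma_j$ with a cocompact group of Euclidean isometries of $\R^{n-1}$, so by Bieberbach's theorem its translation subgroup $T_j < \Gamma_j$ is free abelian of rank $n-1$ and of finite index, with finite holonomy quotient $\Phi_j = \Gamma_j/T_j$ acting faithfully on $T_j$. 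The cross-section of the cusp at $v_j$ is the closed flat manifold with fundamental group $\Gamma_j$, and its $T_j$-cover is a torus; hence in a finite cover the corresponding cusp has torus cross-section precisely when its peripheral group lies in a conjugate of $T_j$.

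For the torus condition I would separate the translation lattices. Each $T_j$ has finite index in the geometrically finite group $\Gamma_j$, hence is itself geometrically finite; in the arithmetic setting of Section \ref{section: arithmetic manifolds} Theorem \ref{theorem: GFERF} applies directly to show $T_j$ is separable in $\Gamma$ (in general this separability is the folklore input recorded in \cite{McReynolds-Reid-Stover}). Fix $j$. Since $\Phi_j$ is finite, choose representatives $g_1,\dots,g_m$ for the non-trivial cosets of $T_j$ in $\Gamma_j$; for each, separability yields a finite-index subgroup of $\Gamma$ containing $T_j$ but excluding $g_i$. Intersecting these finitely many subgroups over all $i$ and all cusps $j$, and then passing to the normal core, produces a finite-index normal subgroup $\Gamma_1 \lhd \Gamma$ with $\Gamma_1 \cap \gamma\Gamma_j\gamma^{-1} \le \gamma T_j \gamma^{-1}$ for every $j$ and every $\gamma \in \Gamma$. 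Every cusp of $M_1 = \Gamma_1\backslash\mathbb{H}^n$ then has free abelian peripheral group, i.e.\ a torus cross-section.

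To guarantee at least two cusps I would split a single cusp of $M_1$, using separability of a cusp subgroup (Theorem \ref{theorem: PERF}, noting that $\Gamma_1$ is again arithmetic as a finite-index subgroup; in general one again invokes \cite{McReynolds-Reid-Stover}). Let $\Delta$ be a cusp group of $\Gamma_1$ and pick any $g \in \Gamma_1 \setminus \Delta$, which exists because $\Delta$ is a proper, infinite-index subgroup of the finite-covolume group $\Gamma_1$. Separability of $\Delta$ gives a finite-index $K$ with $\Delta \le K$ and $g \notin K$. The cusps of $K\backslash\mathbb{H}^n$ lying over this cusp biject with the double cosets $K\backslash\Gamma_1/\Delta$, and since $\Delta \le K$ while $g \notin K = K\Delta$, the double cosets $K\Delta$ and $Kg\Delta$ are distinct; thus the cusp splits into at least two. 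Taking $M'$ to be the cover corresponding to $K$ (arranged inside $\Gamma_1$, so that $M' \to M_1 \to M$), we obtain at least two cusps, and since any lift of a torus cusp under a further covering is again a torus, $M'$ has only torus cusps.

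The main obstacle is the torus step, and specifically the separability of the translation lattices $T_j$: this is exactly what upgrades the purely local fact (each cross-section is virtually a torus, by Bieberbach) to a single finite cover of $M$. The remaining points are bookkeeping—taking $\Gamma_1$ normal so that the containment $\Gamma_1 \cap \gamma\Gamma_j\gamma^{-1} \le \gamma T_j\gamma^{-1}$ holds simultaneously on all conjugates, the elementary double-coset count that forces a cusp to split, and performing the splitting cover inside $\Gamma_1$ so that the two properties do not interfere.
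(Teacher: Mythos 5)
The paper never proves this statement: it is imported verbatim as Proposition 3.1 of \cite{McReynolds-Reid-Stover} (flagged as ``folk-lore''), so there is no internal proof to compare yours against. Judged on its own terms, your two-step strategy is the standard one and the group theory checks out: the Bieberbach decomposition $T_j \lhd \Gamma_j$ with finite holonomy quotient, the intersection-then-normal-core argument giving $\Gamma_1 \cap \gamma\Gamma_j\gamma^{-1} \leq \gamma T_j\gamma^{-1}$ for every $\gamma$ (so every peripheral subgroup of $\Gamma_1$ consists of parabolic translations and the cross-sections really are tori), and the double-coset count $K\backslash\Gamma_1/\Delta$ with $K\Delta = K \neq Kg\Delta$ that splits a cusp, which survives into the final cover because covers of torus cusps are torus cusps.

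The one genuine soft spot is the separability input. Theorem \ref{theorem: GFERF} applies only to arithmetic lattices of simplest type and Theorem \ref{theorem: PERF} only to arithmetic lattices, while the statement is asserted for \emph{every} finite-volume hyperbolic $n$-manifold; your parenthetical appeal to ``the folklore input recorded in \cite{McReynolds-Reid-Stover}'' for the general case is circular, since that input is essentially the proposition being proved. For this paper the gap is harmless: Theorem \ref{theorem: existence} only ever invokes the result for finite-index subgroups of $\SO(j_n,\Z)$, which are arithmetic of simplest type, and both $T_j$ and $\Delta$ are geometrically finite (their limit sets are single bounded parabolic points), so Theorem \ref{theorem: GFERF} legitimately supplies every separability you use. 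To prove the statement in the stated generality you would need either separability of abelian (or peripheral) subgroups for arbitrary lattices in $\mathrm{Isom}(\mathbb{H}^n)$ --- which is known but does not follow from anything quoted in this paper --- or the more elementary congruence argument: conjugate a finite-index subgroup of $\Gamma$ into the $S$-integers of a number field, observe that elements of $T_j$ are unipotent while representatives of nontrivial cosets in $\Gamma_j/T_j$ have rotational parts that are nontrivial roots of unity, and reduce modulo a prime of residue characteristic coprime to $|\Phi_j|$ so that the image of $T_j$ is a $p$-group while no coset representative maps into it. Either repair would make your argument a complete proof; as written it is a correct proof only of the arithmetic case the paper actually uses.
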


We proceed with the several lemmas to be applied sequentially in Theorem \ref{theorem: existence}.

\begin{lemma}[Selberg's lemma]\label{lemma: Selberg}
A finitely generated linear group over a field of characteristic $0$ is virtually torsion free.
\end{lemma}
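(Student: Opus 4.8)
The plan is to realize $\Gamma$ inside $\GL(n,A)$ for a finitely generated coefficient ring $A$ and then annihilate all torsion by passing to a congruence subgroup. Since $\Gamma$ is finitely generated, the finitely many entries of a generating set together with the entries of the inverse generators generate a subring $A\subset K$; as $K$ has characteristic $0$, $A$ is a finitely generated $\Z$-algebra and an integral domain, and by construction $\Gamma<\GL(n,A)$. I will use two standard facts about such an $A$: first, it is a Jacobson ring, so for every maximal ideal $\mathfrak m$ the residue field $A/\mathfrak m$ is finite (the Nullstellensatz for finitely generated $\Z$-algebras); second, the map $\mathrm{Spec}\,A\to\mathrm{Spec}\,\Z$ has dense image, so there are maximal ideals of infinitely many distinct residue characteristics. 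For any such $\mathfrak m$, reducing matrix entries gives a homomorphism $\Gamma\to\GL(n,A/\mathfrak m)$ into a finite group, whose kernel $\Gamma(\mathfrak m)$ is a normal subgroup of finite index.

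The heart of the argument is to show that a well-chosen $\Gamma(\mathfrak m)$ (or an intersection of two) is torsion-free. Suppose $g\in\Gamma(\mathfrak m)$ has finite order and $g\neq I$. After replacing $g$ by a suitable power I may assume $g$ has prime order $\ell$; a finite-order matrix in characteristic $0$ is diagonalizable, so, being nontrivial, $g$ has an eigenvalue $\zeta$ which is a primitive $\ell$-th root of unity. The eigenvalues of $g$ are integral over $A$, so I extend $\mathfrak m$ to a prime $\mathfrak P$ in the integral closure of $A$ in a splitting field of the characteristic polynomial of $g$. Since $g\equiv I$ modulo $\mathfrak m$, its characteristic polynomial reduces to $(x-1)^n$ modulo $\mathfrak P$; as $\zeta$ is a root, $\zeta\equiv 1$ modulo $\mathfrak P$, i.e. $\mathfrak P$ divides $\zeta-1$. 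If $\ell$ is different from the residue characteristic $p$ of $\mathfrak m$, the cyclotomic identity $\prod_{j=1}^{\ell-1}(1-\zeta^{\,j})=\ell$ shows $1-\zeta$ divides $\ell$; but $\mathfrak P$ lies over $p$ and $\ell$ is prime to $p$, so $\mathfrak P$ cannot divide $1-\zeta$, a contradiction. Hence the only torsion surviving in $\Gamma(\mathfrak m)$ could have order exactly $p$.

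To eliminate this last possibility I use two primes. By the second fact above I choose a further maximal ideal $\mathfrak m'$ whose residue characteristic $p'$ differs from $p$, and I run the identical computation for $\mathfrak m'$, which rules out every torsion element of order different from $p'$. Consequently a nontrivial torsion element of $\Gamma(\mathfrak m)\cap\Gamma(\mathfrak m')$ would have prime order equal simultaneously to $p$ and to $p'$, which is absurd. Therefore $\Gamma(\mathfrak m)\cap\Gamma(\mathfrak m')$ is torsion-free, and it has finite index in $\Gamma$ as an intersection of two finite-index subgroups; this is the desired subgroup.

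I expect the genuine obstacle to lie in the commutative-algebra bookkeeping rather than in the group theory: verifying cleanly that $A$ is a finitely generated $\Z$-algebra whose maximal ideals have finite residue fields of infinitely many characteristics, and extending $\mathfrak m$ to a prime $\mathfrak P$ over the eigenvalues so that reduction of the characteristic polynomial is legitimate. Once these are in hand, the two-prime device sidesteps any ramification estimate, and the elementary cyclotomic identity $\prod_{j=1}^{\ell-1}(1-\zeta^{\,j})=\ell$ supplies the only real input.
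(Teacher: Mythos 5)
Your proof is correct. The paper does not actually prove this lemma --- it defers entirely to the cited reference (Nica) --- and your argument (embedding $\Gamma$ in $\GL(n,A)$ for a finitely generated domain $A\subset K$, reducing modulo a maximal ideal with finite residue field, using the identity $\prod_{j=1}^{\ell-1}(1-\zeta^{j})=\ell$ to exclude $\ell$-torsion for $\ell$ prime to the residue characteristic, and intersecting two congruence subgroups of distinct residue characteristics) is exactly the standard proof found there, so it matches the intended argument.
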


We refer the reader to \cite{Nica} for a proof of Selberg's lemma. In terms of covering space theory, this means that an orbifold whose fundamental group can be realized in a linear group (for example a hyperbolic orbifold) has a finite-sheeted manifold cover. 

The following is a technical definition that will ease in the statement of the following lemmas.

\begin{definition}[Property $(\star)$]\label{def: star}
Suppose a hyperbolic $n$-manifold (orbifold) has $n-1$ codimension-1 totally geodesic immersed submanifolds (suborbifolds), $\mathcal{S}=\{\Sigma_i\}_{i=1}^{n-1}$, and a cusp neighborhood $P$. We will say that $\mathcal{S}$ has \textit{property $(\star)$ at $P$} if each $\Sigma_i\in \mathcal{S}$ intersects $P$ essentially, and all $\Sigma_i$ share a common intersection which is a ray inside of $P$, and at this intersection they are pairwise orthogonal. 
\end{definition}

The intent is that the submanifolds should be orthogonal inside of $P$, but no pair is allowed to be vacuously orthogonal (parallel in $P$).

The following lemma describes how we may lift immersed submanifolds to embedded submanifolds, while retaining a desired intersection.

\begin{lemma}[Lifting immersed submanifolds to embedded]\label{lemma: immersed to embedded}
Let $\mathcal{S}=\{\Sigma_i\}_{i=1}^{n-1}$ be a family of immersed totally geodesic submanifolds in a finite volume cusped arithmetic hyperbolic manifold $M$. Suppose that there is a cusp neighborhood $P \subset M$ so that $\mathcal{S}$ has property $(\star)$ at $P$.

Then there is a finite-sheeted cover $M'$ of $M$ with a cusp $P'$ covering $P$ and for each $i$ an embedded totally geodesic submanifold $\Sigma_i'$ covering $\Sigma_i$ so that the collection $\{\Sigma_i'\}_{i=1}^{n-1}$ has property $(\star)$ at $P'$.
\end{lemma}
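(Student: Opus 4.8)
The plan is to realize each immersed $\Sigma_i$ as an embedded submanifold in a finite cover by a separability argument in the style of Scott \cite{Scott}, while arranging the cover so that property $(\star)$ survives at a cusp $P'$ lying over $P$. First I would pass to a finite cover using Theorem~\ref{theorem: torus cusps} so that $P$ is a torus cusp, and set coordinates on $\tilde{M} \cong \mathbb{H}^n$ as in Section~\ref{section: local bending}: place the cusp point of $P$ at $[e_1]$, so that the cusp subgroup $\Gamma_\infty = \mathrm{stab}_\Gamma([e_1])$ acts on the boundary horosphere by Euclidean translations, and lift each $\Sigma_i$ to a hyperplane $\Pi_i$ whose ideal boundary contains $[e_1]$ (possible since $\Sigma_i$ essentially intersects $P$). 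Property $(\star)$ lets me take the $\Pi_i$ to pass through a single common geodesic ending at $[e_1]$ and to be pairwise orthogonal along it. Since orthogonality and essential intersection are infinitesimal and end conditions that pass to covers automatically, and the common ray, being contractible, lifts homeomorphically to whichever cusp of the cover contains it, the only genuine content of the lemma is to make each $\Sigma_i$ embedded without destroying this configuration.

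Each $\Sigma_i$ corresponds to the geometrically finite subgroup $H_i = \mathrm{stab}_\Gamma(\Pi_i) < \Gamma$, and the immersion $H_i\backslash\Pi_i \to M$ fails to be injective exactly when there is $g \in \Gamma$ with $g\Pi_i \cap \Pi_i \neq \emptyset$ and $g\Pi_i \neq \Pi_i$, i.e. a nontrivial double coset $H_i g H_i$ producing a transverse self-intersection. To make $\Sigma_i$ embed in a finite cover $M' = \Gamma'\backslash\mathbb{H}^n$ with $H_i < \Gamma'$ it suffices to choose $\Gamma'$ disjoint from a set of representatives of these bad double cosets. I would argue that, once the cusp is controlled, there are only finitely many such cosets: in the horoball picture the translates $g\Pi_i$ with $[e_1]$ on their ideal boundary are vertical hyperplanes, and those in a single $\Gamma_\infty$-orbit are mutually parallel (as $\Gamma_\infty$ acts by translations), hence disjoint, contributing no transverse self-intersections deep in the cusp; intersections coming from hemispherical translates all occur below a fixed height, i.e. in the compact part, where geometric finiteness leaves only finitely many. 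For each such bad $g$, Theorem~\ref{theorem: GFERF} produces a finite-index subgroup $H_i < \Gamma_g < \Gamma$ with $g \notin \Gamma_g$; intersecting these finitely many subgroups over all $i$ yields a single finite-index $\Gamma'$ in which every $\Sigma_i$ embeds.

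The main obstacle, and where Theorem~\ref{theorem: PERF} enters, is the cusp: vertical sheets of $\Sigma_i$ whose defining parabolic fixed point lies on the ideal boundary hyperplane of $\Pi_i$ but is not $\Gamma_\infty$-equivalent to $[e_1]$ give vertical hyperplanes that are \emph{not} parallel to $\Pi_i$, and these can accumulate transverse self-intersections, and unwanted $\Sigma_i$--$\Sigma_j$ intersections, arbitrarily far out the cusp, breaking the finiteness above and the requirement that the common intersection be a single ray. To handle this I would apply Theorem~\ref{theorem: PERF}, the separability of the parabolic stabilizer $\mathrm{stab}([e_1])\cap\Gamma$, to pass to a further finite cover in which each $\Sigma_i$ meets the cusp $P'$ over $P$ in a single parallel family, equivalently, the offending vertical sheets are routed out through distinct cusps of the cover. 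After this step, near $P'$ the chosen lifts $\Pi_i$ meet only along the common geodesic to $[e_1]$.

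Finally I would combine the two finite covers, Theorem~\ref{theorem: GFERF} for the compact part and Theorem~\ref{theorem: PERF} for the cusp, into one finite cover $M'$, choose $P'$ to be the cusp over $P$ containing the lift of the common ray, and set $\Sigma_i'$ to be the component of the preimage of $\Sigma_i$ through that ray. Embeddedness comes from the first cover, the single-ray common intersection from the second, and pairwise orthogonality together with essential intersection are inherited from $M$; hence $\{\Sigma_i'\}_{i=1}^{n-1}$ has property $(\star)$ at $P'$. The delicate point throughout is that the two separability steps must be taken compatibly, so that neither identifies the chosen lifts nor severs the common ray, which is why the cusp-stabilizer separability is applied consistently with the configuration fixed in the first paragraph.
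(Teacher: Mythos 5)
Your argument is correct and follows essentially the same route as the paper: apply Theorem \ref{theorem: GFERF} to separate each $\pi_1(\Sigma_i)$ from representatives of the finitely many double cosets producing self-intersections, pass to the resulting finite cover, and preserve property $(\star)$ by choosing every lift $\Sigma_i'$ through a single lift of the common ray (the paper accomplishes this last step with an evenly-covered neighborhood of a point on that ray, and proceeds inductively one submanifold at a time rather than intersecting all the finite-index subgroups at once). Your additional appeal to Theorem \ref{theorem: PERF} is harmless but not required for this lemma, since property $(\star)$ does not demand that $\Sigma_i' \cap P'$ have a single component; that condition is deferred to Lemma \ref{lemma: single intersections at cusp}.
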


It is well-known that subgroup separability allows lifting of immersed submanifolds to embedded submanifolds in a finite sheeted cover. The trick is to maintain the intersection properties at a cusp. 

\begin{proof}

We proceed inductively. Suppose that for $i=1,\dots j-1$ we have that $\Sigma_i$ is embedded in $M$ (and the intersection properties are as described in the lemma). $\Sigma_j$ may be immersed.

\begin{figure}[h]
\centering
\includegraphics[width=.5 \textwidth]{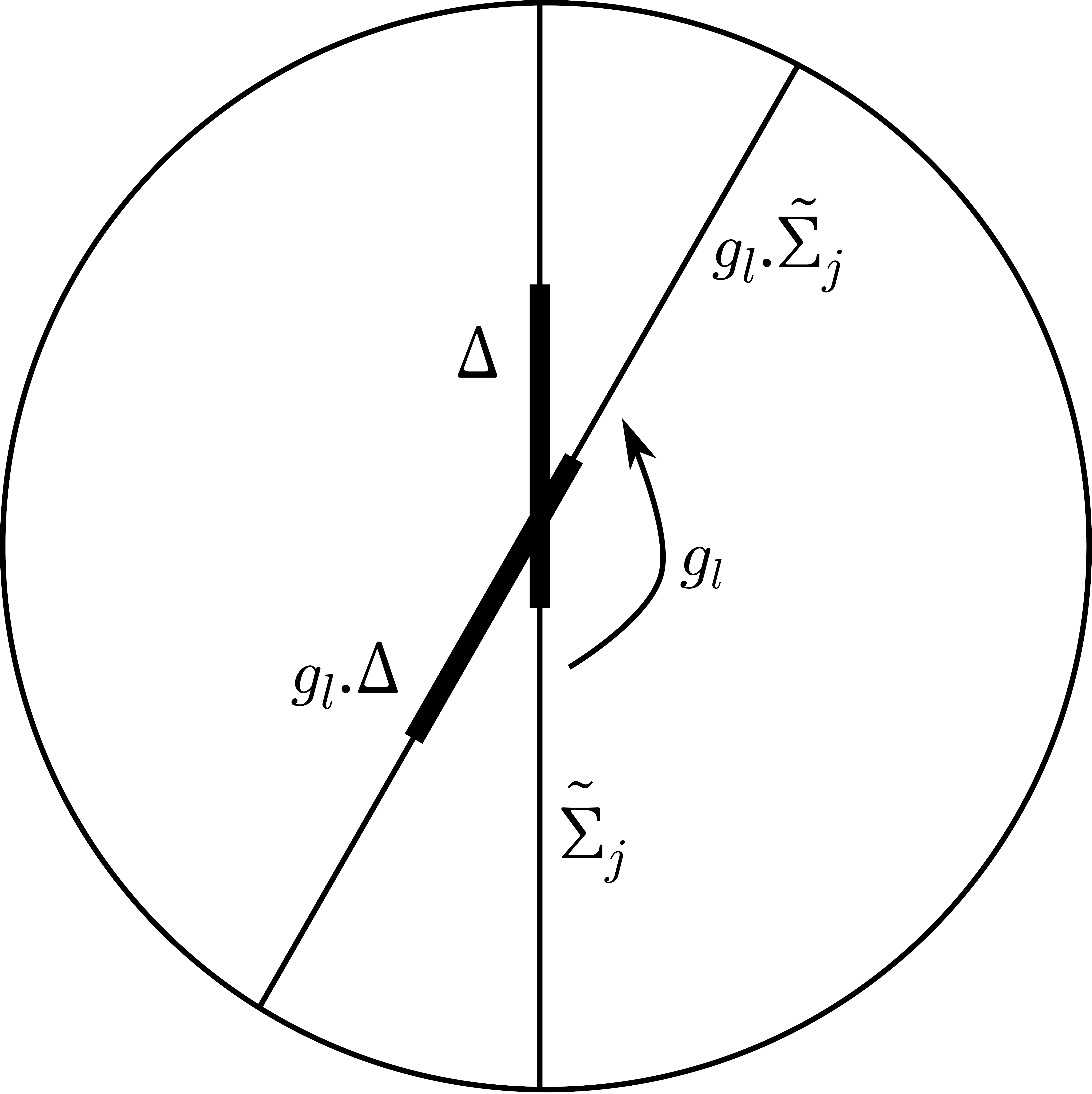}
\caption{An element $g_l \in \pi_1 M$ causes an unwanted self-intersection of a submanifold.}
\end{figure}

Choose a copy of the universal cover of $\Sigma_j$, $\tilde{\Sigma}_j$, in the universal cover of $M$ and a fixed fundamental domain $\Delta$ for the action of $\pi_1(\Sigma_j)$. There is some finite (possibly empty) collection of deck transformations $\{g_l\}\subset \pi_1(M) \setminus \pi_1(\Sigma_j)$ so that $g_l.\Delta\cap\Delta\neq\emptyset$. All other intersections of copies of $\tilde{\Sigma}_j$ are $\pi_1(\Sigma_j)$-translates of those in $\Delta$.

Since $\Sigma_j$ is totally geodesic, $\pi_1\Sigma_j$ is geometrically finite. Applying Theorem \ref{theorem: GFERF} gives a subgroup $H<\pi_1(M)$ with finite index so that $\pi_1(\Sigma_j)<H$ and $g_l\notin H$ for all $l$. Let $M_H$ be the corresponding finite-sheeted cover of $M$. In this cover, $\Sigma_j$ has a lift $\Sigma_j'$ that is embedded.

It remains to show that we can pick covers of $\Sigma_i$, $i<j$ that retain the desired intersection properties. If we do not make a careful choice, it is possible that the covers intersect in no cusp at all. $P$ is covered by some finite number of cusps in $M_H$. Since $\mathcal{S}$ has property $(\star)$ in $P$, we may pick a point in their mutual intersection and examine an evenly covered neighborhood $U$ of that point. Choosing $U$ to be sufficiently small, we have that the submanifolds intersect locally in the canonical arrangement of orthogonal $(n-1)$-planes in $\mathbb{H}^n$. The neighborhoods covering $U$ are isometric to $U$ by restrictions of the covering map. Choose a covering neighborhood $\tilde{U}$ in $M_H$ which $\Sigma_j'$ intersects, and choose $\{\Sigma_i'\}_{i=1}^{n-1}$ to be the covers of $\Sigma_i$ which also intersects $\tilde{U}$. Now for $i=1,\dots,j$, $\Sigma_i'$ is embedded and the collection $\{\Sigma_i'\}_{i=1}^{n-1}$ intersect in $P'$ as desired.

Repeating this process results in a finite sheeted-cover of $M$ with $n-1$ embedded submanifolds that retain property $(\star)$ at some cusp.
\end{proof}

The technique used above, wherein we use an evenly-covered neighborhood of the desired intersection is the key. We use a very similar argument to eliminate non-orthongal mutual intersections of the submanifolds away from the cusp in the following lemma.

\begin{lemma}[Eliminating non-orthogonal mutual intersections]\label{lemma: remove bad intersections}
Let $\mathcal{S}=\{\Sigma_i\}_{i=1}^{n-1}$ be embedded totally geodesic submanifolds in a finite volume cusped arithmetic hyperbolic manifold $M$. Suppose that there is a cusp neighborhood $P \subset M$ so that $\{\Sigma_i\}$ have property $(\star)$ at $P$. 

Then there is a finite-sheeted cover $M'$ of $M$ and covers $\Sigma_i'$ of each $\Sigma_i$ so that $\{\Sigma_i'\}_{i=1}^k$ meet only pairwise orthogonally and  $\{\Sigma_i'\}$ have property $(\star)$ at some cusp $P'$ covering $P$.
\end{lemma}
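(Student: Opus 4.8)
The plan is to mimic the proof of Lemma \ref{lemma: immersed to embedded} almost verbatim, replacing the separability application that removes unwanted \emph{self}-intersections with one that removes unwanted \emph{pairwise} intersection points whose angle is not a right angle. The submanifolds are already embedded, and by property $(\star)$ they intersect orthogonally along a common ray inside $P$; the only defect to repair is that away from the cusp a pair $\Sigma_i', \Sigma_j'$ may meet in components where they are not orthogonal. These bad components must be pushed apart in a finite cover while preserving the good orthogonal intersection at $P$.

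First I would work one unordered pair $(i,j)$ at a time (there are finitely many), and fix lifts $\tilde\Sigma_i, \tilde\Sigma_j$ to $\tilde M \cong \mathbb{H}^n$ that cross orthogonally along the chosen ray out the cusp at $[e_1]$. The intersection pattern of $\Sigma_i$ and $\Sigma_j$ downstairs is governed by the double cosets $\pi_1(\Sigma_i) \backslash \pi_1(M) / \pi_1(\Sigma_j)$: a representative $g$ contributes an intersection component of $\Sigma_i \cap \Sigma_j$ precisely when $g.\tilde\Sigma_j$ meets $\tilde\Sigma_i$, and because both groups are geometrically finite this yields only finitely many intersection components up to the relevant symmetry. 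I would collect the finite set of offending double-coset representatives $\{g_m\}$ for which the corresponding geodesic hyperplanes $\tilde\Sigma_i$ and $g_m.\tilde\Sigma_j$ meet at an angle other than $\pi/2$ (in particular none of these $g_m$ lies in the orthogonal stabilizer realizing the good intersection at $P$).

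Next I would apply Theorem \ref{theorem: GFERF}: since $\pi_1(\Sigma_i)$ is geometrically finite (the $\Sigma_i$ are totally geodesic and hence their groups are geometrically finite) and each $g_m \notin \pi_1(\Sigma_i)$, GFERF gives a finite-index subgroup $H < \pi_1(M)$ containing $\pi_1(\Sigma_i)$ with $g_m \notin H$ for all $m$. In the associated finite cover $M_H$, the lifts of $\Sigma_i$ and $\Sigma_j$ no longer meet along these non-orthogonal components. The delicate point, exactly as in the previous lemma, is to retain property $(\star)$: the cusp $P$ lifts to several cusps and the good orthogonal mutual intersection must survive in one of them. I would handle this with the same evenly-covered-neighborhood device: take the point on the common intersection ray in $P$, an evenly covered neighborhood $U$ in which all $\Sigma_\ell$ appear in the canonical orthogonal arrangement, pick a sheet $\tilde U$ of the cover that a chosen lift $\Sigma_i'$ meets, and then choose the lifts $\Sigma_j'$ (and the already-embedded $\Sigma_\ell'$, $\ell \neq i,j$) to be those passing through $\tilde U$. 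Because the covering map restricts to an isometry on each sheet, property $(\star)$ is preserved verbatim at the cusp $P'$ corresponding to $\tilde U$.

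I would then iterate this construction over the finitely many pairs $(i,j)$, passing to a further finite cover at each stage. Each step removes the non-orthogonal intersections of one pair without introducing new ones for previously-corrected pairs (since later covers are themselves covers of the corrected manifold, and an intersection that has been made orthogonal stays orthogonal under further covering), and each step preserves $(\star)$ by the neighborhood argument. After exhausting all pairs we obtain the desired $M'$ with covers $\Sigma_i'$ meeting only pairwise orthogonally and still having property $(\star)$ at a cusp $P'$ over $P$. The main obstacle, and the point deserving the most care, is precisely the bookkeeping in the iteration: verifying that correcting one pair does not resurrect bad intersections for another and confirming that the evenly-covered-neighborhood selection can be made \emph{simultaneously} consistent across all $n-1$ submanifolds at every stage so that $(\star)$ genuinely survives to the end.
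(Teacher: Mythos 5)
Your proposal is correct and follows essentially the same route as the paper: identify the finitely many deck transformations producing non-orthogonal intersections, exclude them via GFERF separability over the geometrically finite subgroup, and recover property $(\star)$ by selecting lifts through a sheet of an evenly covered neighborhood of the common intersection ray in $P$. The only difference is organizational---the paper inducts on the index $j$, separating $\pi_1(\Sigma_j)$ from all elements $g_l$ whose translates of a fundamental domain $\Delta_j$ meet any $\Delta_i$ non-orthogonally, whereas you iterate over unordered pairs and phrase the finiteness via double cosets---which does not change the substance of the argument.
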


\begin{figure}[h]
\centering
\includegraphics[width=.5 \textwidth]{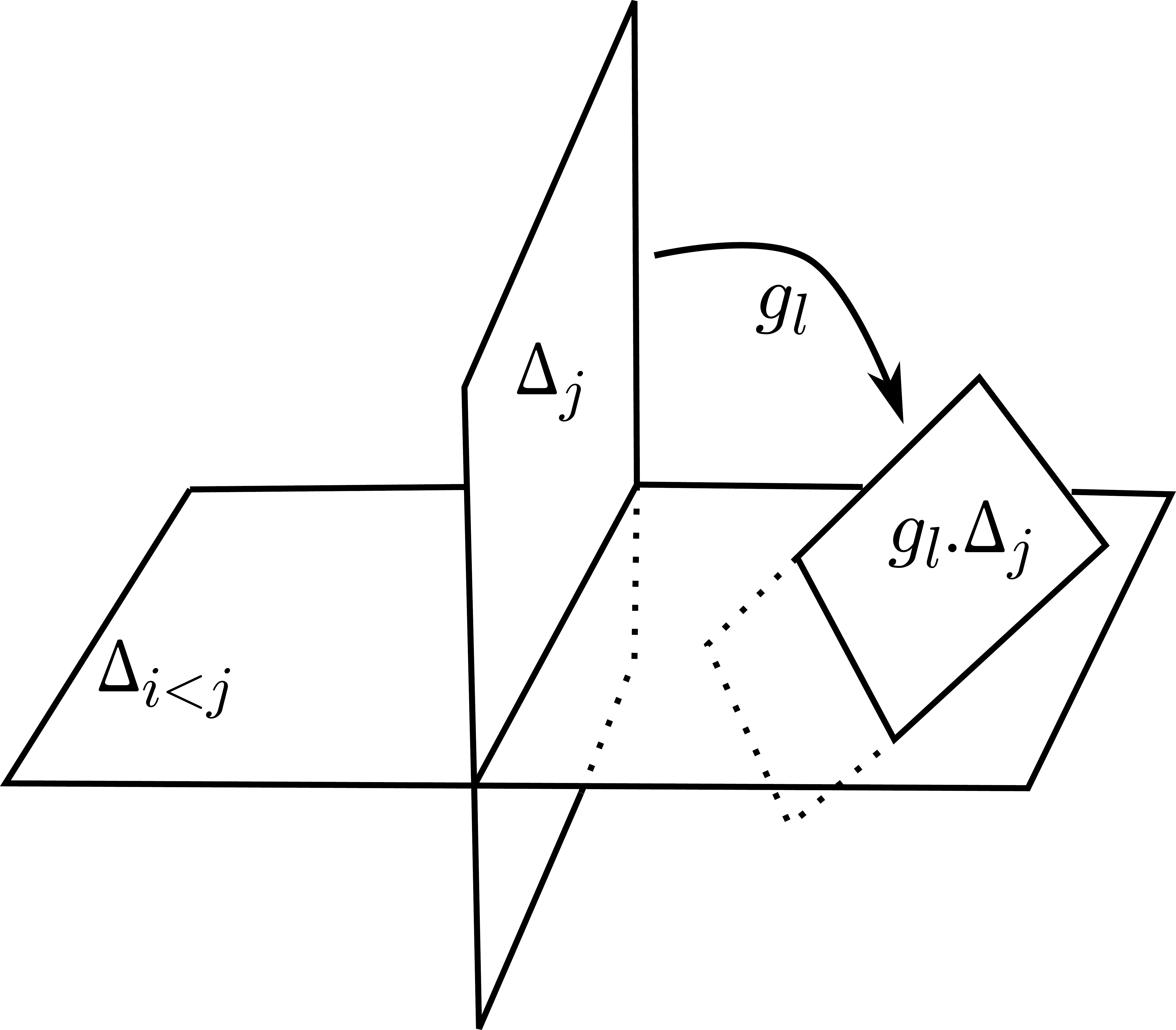}
\caption{An element $g_l \in \pi_1 M$ causes two submanifolds to intersect non-orthogonally.}
\end{figure}

\begin{proof}
The argument is similar to the previous lemma. Suppose for induction that $\Sigma_1,\dots,\Sigma_{j-1}$ intersect only orthogonally. For each $i=1,\dots,j$, choose a fundamental domain, $\Delta_i$ for the action of $\pi_1(\Sigma_i)$ on some copy of its universal cover, $\tilde{\Sigma_i}$ in $\mathbb{H}^n\cong\tilde{M}$, and make this choice so that all copies intersect orthogonally. There is some finite set $\{g_l\}_{l\in I}\subset \pi_1(M)$ ($I$ finite) so that $g_l.\Delta_j$ intersects some other $\Delta_i$ non-orthogonally.

Separability provides a finite-index subgroup, $H$, including $\pi_1(\Sigma_j)$ but excluding all $g_l$. Let $M_H$ be the corresponding covering space, and lift $\Sigma_j$ to $\tilde{\Sigma}_j$ in $M_H$. Choose an evenly covered neighborhood, $U$ of the intersection of $\mathcal{S}$ in $P$, and a cover $\tilde{U}\subset \tilde{P}$ which intersects $\tilde{\Sigma}_j$. $\tilde{U}$ is isometric to $U$, and we may use it as before to choose finite-sheeted covers for $\Sigma_i \in \mathcal{S} \setminus \{\Sigma_j\}$ which have property $(\star)$ at $\tilde{P}$. 

Proceeding inductively proves the lemma.
\end{proof}

The next lemma will be used to ensure that each submanifold meets a cusp $P$ in only one component

\begin{lemma}[Ensuring unique cusp intersections]\label{lemma: single intersections at cusp}
Let $\mathcal{S}=\{\Sigma_i\}_{i=1}^{n-1}$, be embedded totally geodesic submanifolds in a finite-volume arithmetic hyperbolic manifold $M$. Suppose that the family $\{\Sigma_i\}$ intersect pairwise orthogonally and has property $(\star)$ at some cusp $P\cong T\times \R_{\geq0}\subset M$ (where $T$ is the $(n-1)$-torus).

Then there is a finite cover $M'$ of $M$ containing covers $\{\Sigma_i'\}_{i=1}^k$ of $\Sigma_i$, and some cover $P'\cong T'\times \R_{\geq0}$ of $N$ with all of the above intersection properties and the additional property that $\Sigma_i'\cap P'$ has exactly one component.
\end{lemma}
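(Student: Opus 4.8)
The plan is to follow the same separability-plus-evenly-covered-neighborhood strategy used in the two preceding lemmas, but now the element to be excluded is one that carries a component of $\Sigma_i \cap P$ to a distinct parallel component of the same cusp. First I would fix the cusp $P \cong T \times \R_{\geq 0}$ and pass to the universal cover, realizing $\tilde P$ as a horoball $B$ centered at a point $v \in \partial \mathbb{H}^n$, with $\pi_1(P) = \mathrm{stab}(v) \cap \pi_1(M)$ acting on it. Each embedded $\Sigma_i$ intersects $P$ essentially, so $\Sigma_i \cap P$ is a disjoint union of finitely many totally geodesic $(n-2)$-dimensional submanifolds of the cusp cross-section; exactly one of these is the distinguished component $R_i$ through which the common ray of property $(\star)$ passes. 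A second component of $\Sigma_i \cap P$ arises precisely when there is a deck transformation $g \in \pi_1(M) \setminus \pi_1(\Sigma_i)$ fixing $v$ (so that $g$ preserves the horoball $B$) and carrying the chosen lift $\tilde \Sigma_i$ to a translate that still meets $B$ but is not a $\pi_1(\Sigma_i)$-translate of $\tilde\Sigma_i$ inside the cusp.

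The key observation is that such offending elements lie in $\mathrm{stab}(v) \cap \pi_1(M)$, the cusp subgroup, and there are only finitely many of them modulo $\pi_1(\Sigma_i \cap P)$, since $\Sigma_i \cap P$ has finitely many components. I would therefore apply Theorem \ref{theorem: PERF} (McReynolds' separability of the stabilizer of an ideal point) together with Theorem \ref{theorem: GFERF}: for the finite collection $\{g_l\}$ of deck transformations producing extra components of $\Sigma_i \cap P$, separability of $\pi_1(\Sigma_i)$ (which is geometrically finite since $\Sigma_i$ is totally geodesic) furnishes a finite-index subgroup $H < \pi_1(M)$ containing $\pi_1(\Sigma_i)$ but excluding every $g_l$. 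Passing to the corresponding finite cover $M_H$ and lifting $\Sigma_i$ to the embedded $\tilde\Sigma_i$, the offending identifications are undone, so the lift meets the relevant cusp in a single component. As in the previous two lemmas, I would then use an evenly-covered neighborhood $U$ of the common intersection ray inside $P$: choose the cover $\tilde U \subset \tilde P$ that $\tilde\Sigma_i$ meets, and select the covers of the remaining $\Sigma_k$ to be those that also meet $\tilde U$, thereby preserving both the orthogonality and property $(\star)$ established in Lemmas \ref{lemma: immersed to embedded} and \ref{lemma: remove bad intersections}.

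I would run this as an induction on $i = 1, \dots, n-1$, at each stage ensuring $\Sigma_i'$ meets the distinguished cusp in one component while not disturbing the single-component property already achieved for $\Sigma_1', \dots, \Sigma_{i-1}'$ (the evenly-covered-neighborhood selection guarantees the earlier submanifolds' distinguished components persist under the new cover). After $n-1$ steps the resulting cover $M'$ with cusp $P'$ and submanifolds $\{\Sigma_i'\}$ has all the intersection properties of the hypotheses plus the desired uniqueness.

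The step I expect to be the main obstacle is verifying that the extra components of $\Sigma_i \cap P$ are genuinely detected by elements of the cusp stabilizer $\mathrm{stab}(v) \cap \pi_1(M)$, so that Theorem \ref{theorem: PERF} is the correct separability tool rather than mere geometrically-finite separability. One must argue that two distinct components of $\Sigma_i \cap P$ correspond to two lifts of $\tilde\Sigma_i$ both limiting into the horoball $B$, and that the element relating them preserves $B$ and hence fixes $v$; controlling this requires that $\Sigma_i$ intersect $P$ essentially (so its lifts actually accumulate at $v$) and that the number of components stays finite, which is where the geometric finiteness of $\pi_1(\Sigma_i)$ and the finite covolume of the cusp are used. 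Once these components are correctly matched with a finite set of cusp-stabilizer elements, the separability input and the evenly-covered-neighborhood bookkeeping are routine.
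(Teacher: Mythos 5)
Your overall architecture (induct over the $\Sigma_i$, exclude finitely many offending deck transformations by separability, then use an evenly covered neighborhood of the common ray to preserve property $(\star)$) matches the paper, but two linked steps go wrong. First, your ``key observation'' that the offending elements lie in $\mathrm{stab}(v)\cap\pi_1(M)$ is false. The components of $\Sigma_i\cap P$ are indexed by double cosets $\pi_1(P)\,g\,\pi_1(\Sigma_i)$ with $g\tilde\Sigma_i\cap\tilde P\neq\emptyset$, and a representative $g$ of a nontrivial coset is a general element of $\pi_1(M)$: it must carry some parabolic point of $\partial\tilde\Sigma_i$ to $v$, but it need not fix $v$ or preserve the horoball $B$ (for instance, two distinct cusps of $\Sigma_i$ may both run out the cusp $P$, and the element identifying the corresponding lifts moves one parabolic point of $\partial\tilde\Sigma_i$ to another). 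Second, and more seriously, you then apply separability to the wrong subgroup: you build $H\supset\pi_1(\Sigma_i)$ excluding the $g_l$, which is pure GFERF, whereas the paper builds $H\supset\pi_1(P)$ excluding the $g_l$ --- this is precisely what Theorem~\ref{theorem: PERF} (separability of peripheral subgroups) is quoted for, and in your write-up that theorem is invoked but never actually used.

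The swap is not cosmetic. With $\pi_1(\Sigma_i)<H$ the cusp $P$ is covered by a nontrivial finite cover $P'$, and the unwanted component indexed by $g_l$ reappears in $\Sigma_i'\cap P'$ as soon as \emph{some} element of the infinite coset $\pi_1(P)\,g_l$ lies in $H$; excluding the single element $g_l$ does not rule this out, since $p\,g_l$ can lie in $H$ even when $g_l$ does not. The paper's choice $\pi_1(P)<H$ makes $P$ lift homeomorphically to $P'$, so each component of $\Sigma_i\cap P$ has a unique preimage in $P'$ and the only remaining question is which lift of $\Sigma_i$ that preimage lies on; the fundamental-domain enumeration of the elements with $g_l.\Delta_j\cap\Delta_T\neq\emptyset$ is what controls that. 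To repair your version you would either have to switch to the paper's choice of $H$ (and hence actually use Theorem~\ref{theorem: PERF}), or strengthen the exclusion to the double-coset condition $\pi_1(P)g_l\cap H=\emptyset$, which does not follow from separability of $\pi_1(\Sigma_i)$ against the finite set $\{g_l\}$ alone.
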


\begin{proof}
Suppose that for $i=1,\dots,j-1$, $\Sigma_i\cap P$ has exactly one component. Choose a cusp cross-section, $T$,  so that for each $i$, the intersection of $\Sigma_i$ with $T\times \R \subset P$ is a collection of cusps in $\Sigma_i$. Choose $\Delta_j$ and $\Delta_T$ intersecting fundamental domains for the actions of $\pi_1(\Sigma_j)$ and $\pi_1(T)$ on fixed copies of $\tilde{\Sigma}_j$ and $\widetilde{T\times\{0\}}$, respectively.

There is some finite collection of $g_l \in \pi_1(M) \setminus \pi_1(\Sigma_j)$ so that $g_l.\Delta_j\cap\Delta_T\neq\emptyset$. Using Theorem \ref{theorem: PERF}, we have a finite index subgroup $H<\pi_1(M)$ so that $\pi_1(P)<H$ but $g_l\notin H$. Let the corresponding finite-sheeted cover be $M'$. Choose $P'$ a lift of $P$ and $\Sigma_j'$ a finite cover of $\Sigma_j$ which intersects $P'$.

As in the previous two lemmas, we choose a cover $\Sigma_i'$ for each $\Sigma_i$, $i = 1,\dots,j-1$ using an evenly covered neighborhood in $P$. Proceeding inductively, we may find a finite cover of $M$ and covers of $\Sigma_j$ where all covers intersect a cusp covering $P$ exactly one time.

\end{proof}

With these lemmas in hand we are prepared to create examples of hyperbolic $n$-manifolds to which we may apply the iterated bending procedure from Section \ref{section: bending examples}.

\begin{theorem}\label{theorem: existence}
There exists a finite volume hyperbolic manifold $M^n$ in every dimension that has the following properties

\begin{enumerate}
    \item There are $(n-1)$ totally geodesic codimension-$1$ embedded submanifolds $\mathcal{S}=\{\Sigma_i\}_{i=1}^{n-1}$ in $M^n$.
    \item The family $\mathcal{S}$ intersect pairwise orthogonally.
    \item There is a torus cusp neighborhood $P\subset M$ such that the family $\mathcal{S}$ has property $(\star)$ at $P$.
    \item For each $i$, the intersection $P\cap \Sigma_i$ has exactly one component.
\end{enumerate}
\end{theorem}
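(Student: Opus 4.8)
The plan is to recognize the four preceding lemmas as a \emph{cleanup pipeline}: each takes a configuration with property $(\star)$ and, at the cost of passing to a finite arithmetic cover, repairs one defect (immersed $\to$ embedded, generic $\to$ orthogonal intersections, multiple $\to$ single cusp component) while preserving $(\star)$. Consequently the only genuinely new input is a single \emph{base} arithmetic example: a finite-covolume hyperbolic orbifold of simplest type carrying $n-1$ immersed totally geodesic hypersurfaces that are mutually orthogonal along a common geodesic running out a cusp. Once such a base is exhibited, the theorem follows by feeding it through the lemmas in order.

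To build the base example I would use the very form $Q_n$ from Section~\ref{section: paraboloid model}, which is integral, has signature $(n,1)$, and is isotropic over $\Q$ (both $e_1$ and $e_{n+1}$ are rational isotropic vectors). Set $\Gamma = \mathrm{O}(Q_n,\Z)$; by Borel--Harish-Chandra this is a non-cocompact lattice, so $\mathcal{O} = \Gamma\backslash \mathbb{H}^n$ is a finite-volume arithmetic orbifold of simplest type with a cusp at $[e_1]$. For each $i = 2,\dots,n$ the coordinate reflection $r_i\colon x_i\mapsto -x_i$ preserves $Q_n$ and lies in $\Gamma$, and its fixed locus is the totally geodesic hyperplane $\Pi_i = \ker(e_i^\ast)\cap\mathbb{H}^n$ already appearing in Section~\ref{section: centralizers of relevant groups}; this yields exactly $n-1$ immersed totally geodesic suborbifolds, each of finite volume and hence geometrically finite. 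The two decisive features are automatic: first, $\Pi_i$ and $\Pi_j$ are orthogonal for $i\neq j$ because their $Q_n$-normals satisfy ${}^t e_i Q_n e_j = \delta_{ij} = 0$; second, the common intersection $\Pi_2\cap\cdots\cap\Pi_n = \{x_2 = \cdots = x_n = 0\}\cap\mathbb{H}^n$ is the geodesic joining the rational isotropic points $[e_1]$ and $[e_{n+1}]$, which projects to a ray exiting the cusp at $[e_1]$ along which all the $\Pi_i$ meet orthogonally. Thus $\{\Pi_i\}_{i=2}^{n}$ has property $(\star)$ (Definition~\ref{def: star}) at this cusp.

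With the base in hand I would assemble the manifold as follows. Selberg's lemma (Lemma~\ref{lemma: Selberg}) produces a finite manifold cover in which the $\Pi_i$ lift to immersed totally geodesic submanifolds, with orthogonality and the cusp-exiting ray persisting, so $(\star)$ survives. Theorem~\ref{theorem: torus cusps} then supplies a further finite cover whose cusps are all tori; selecting a cusp $P$ covering the one from the base retains $(\star)$. Now apply, in order, Lemma~\ref{lemma: immersed to embedded} to make the submanifolds embedded, Lemma~\ref{lemma: remove bad intersections} to force all pairwise intersections to be orthogonal (not merely along the cusp ray), and Lemma~\ref{lemma: single intersections at cusp} to arrange that each $\Sigma_i'$ meets $P$ in a single component. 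Every stage is a finite arithmetic cover preserving $(\star)$ (finite covers of simplest-type arithmetic manifolds remain such, so the hypotheses of the lemmas persist), so the resulting manifold $M^n$ carries $n-1$ embedded, pairwise-orthogonal, single-component totally geodesic submanifolds with property $(\star)$ at a torus cusp --- exactly conclusions (1)--(4).

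The main obstacle is the base construction: one must realize orthogonality of the mirrors and isotropy of their common geodesic \emph{simultaneously}, and the coordinate-hyperplane form $Q_n$ is precisely what makes both hold at once (orthogonality from the diagonal middle block, cusps from the isotropic corner pairing). The remaining delicacy is bookkeeping --- guaranteeing that property $(\star)$ is not destroyed by any of the finite covers --- but this is exactly what the evenly-covered-neighborhood argument built into Lemmas~\ref{lemma: immersed to embedded}--\ref{lemma: single intersections at cusp} is designed to handle, so no further idea is required there.
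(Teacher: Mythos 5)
Your proposal is correct and follows essentially the same route as the paper: exhibit a base arithmetic orbifold of simplest type whose coordinate hyperplanes give $n-1$ mutually orthogonal totally geodesic suborbifolds meeting along a geodesic exiting a cusp, then pass through Selberg's lemma, Theorem \ref{theorem: torus cusps}, and Lemmas \ref{lemma: immersed to embedded}--\ref{lemma: single intersections at cusp} in order. The only difference is cosmetic: you use the form $Q_n$ from Section \ref{section: paraboloid model} where the paper uses the diagonal form $j_n=x_1^2+\dots+x_n^2-x_{n+1}^2$ with lattice $\mathrm{SO}(j_n,\Z)$; both yield the same configuration of coordinate-hyperplane suborbifolds with property $(\star)$ at a rational cusp.
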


Inspiration comes from the lattice $\mathrm{SL}(2,\Z[i])<\mathrm{SL}(2,\mathbb{C})$ acting on $\mathbb{H}^3$ (in the upper half-space model). The two subgroups $\mathrm{SL}(2,\Z)$ and its conjugate under $i$-multiplication are lattices stabilizing hyperplanes. The quotient is an orbifold, and has a cusp centered at $0\in \partial \mathbb{H}^3$ which both codimension-1 suborbifolds intersect orthogonally.

\begin{proof}

Consider the quadratic form $j_n= x_1^2+\dots+x_n^2-x_{n+1}^2$ and the associated hyperbolic space $\mathbb{H}^n= \{v\in\R^{n+1} \vert  j_n(v,v)=-1, v_{n+1}>0\}$. Let the lattice $\Gamma = \mathrm{SO}(j_n,\Z)$. The group $\Gamma$ is an arithmetic lattice of simplest type, and the quotient $\Gamma\backslash \mathbb{H}^n$ is a cusped hyperbolic orbifold of finite volume.

There are $n$ obvious sublattices stabilizing hyperplanes, corresponding to the obvious embeddings of $\mathrm{SO}(j_{n-1},\Z)$ as hyperplane stabilizers (each inducing a reducible representation $\SO(n-1,1)$ into $\SO(n,1)$). Call these integer matrix groups $\Lambda_i$ (these are the subgroups of $\Gamma$ where the $i^{th}$ row and column of the matrices are zero except the diagonal entry, which is one).

Note that each of these subgroups is geometrically finite. The quotient $M_0 =\Gamma \backslash \mathbb{H}^n$ is a hyperbolic orbifold of finite volume with cusps, and there are $n$ geometrically finite sub-orbifolds given by quotients of hyperplanes by $\Lambda_i$. Any $(n-1)$ of these suborbifolds meet with property $(\star)$ at some cusp. If we choose $\{\Lambda_i\}_{i=1}^{(n-1)}$, then the cusp $P_0$ corresponding to the point $[0:0:\dots:1:1]\in\partial\mathbb{H}^n$ is one such cusp (here we denote points in $\partial\mathbb{H}^n$ by a projective line). Call the family of suborbifolds intersecting at $P_0$ with property ($\star$) $\mathcal{S}=\{\Sigma_i\}_{i=1}^{n-1}$.

Because $\Gamma$ is linear, Selberg's lemma \ref{lemma: Selberg} ensures a finite index subgroup $\Gamma'$ which is torsion free. We may also take a further finite-index subgroup to ensure that all cusps are torus cusps by Theorem \ref{theorem: torus cusps}. Let $M_1=\Gamma' \backslash \mathbb{H}^n$ be the corresponding manifold covering $M_0$. Choose $P_1$ to be some cusp in $M_1$ covering $P_0$. There are some submanifold $\mathcal{S}_1=\{\Sigma_{i,1}\}_{i=1}^{n-1}$ which are covers of $\Sigma_i$ that intersect $P_1$ essentially. Certainly, $\mathcal{S}_1$ can be chosen to intersect with property ($\star$) at $P_1$.

It is very possible that some or all $\Sigma_{i,1}\in\mathcal{S}_1$ are immersed, so we apply Lemma \ref{lemma: immersed to embedded} to find $M_2$ with $\mathcal{S}=\{\Sigma_{i,2}\}_{i=1}^{n-1}$ embedded and intersecting some cusp $P_2$ with property $(\star)$. Similarly, we may now apply Lemma \ref{lemma: remove bad intersections} to find another cover $M_3$ with embedded submanifolds $\mathcal{S}_3=\{\Sigma_{i,3}\}_{i=1}^{n-1}$ intersecting with property $(\star)$ at a cusp $P_3$ but with no non-orthogonal intersections. Lastly, Lemma \ref{lemma: single intersections at cusp} ensures that there is a further cover $M_4$ with submanifolds $\mathcal{S}_4=\{\Sigma_{i,4}\}_{i=1}^{n-1}$ and a cusp $P_4$ so that the submanifolds retain property $(\star)$ at $P_4$, and so that for each $i$, $\Sigma_{i,4}\cap P_4$ is a single component. This completes the proof, $M_4$ being the desired manifold in dimension $n$, with submanifold $\mathcal{S}_4$ the intersecting family of submanifolds, and $P_4$ the distinguished cusp.

\end{proof}

The examples which are guaranteed in the above theorem are those on which we will perform the iterated bending procedure from Section \ref{section: local bending}. 

\section{Producing Examples of Non-Standard Cusps in Arithmetic Manifolds}\label{section: main theorem}

We now have nearly everything in place to prove the main theorem. The strategy will be to take the manifolds provided by Theorem \ref{theorem: existence} and bend along the family of orthogonal submanifolds. However, we must guarantee that the resulting representation is the holonomy of some convex projective structure.

For this, we require the following, which is a special case of a theorem of Cooper, Long, and Tillmann. Let $M$ be a convex projective manifold with finitely many ends, all of which are generalized cusps. Let $VFG(M)$ be the set of representations of its fundamental group so that the cusp subgroups have images virtually conjugate into the group of upper-triangular matrices, and let $Rep_{ce}(M)$ be those representations giving a convex projective structure on $M$.

\begin{theorem}[Openness of convex projective structure, \cite{Cooper-Long-Tillmann}]\label{theorem: upper triangular is great}\
The set $Rep_{ce}(M)$ is an open subset of $VFG(M)$.
\end{theorem}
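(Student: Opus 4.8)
The plan is to establish openness through the deformation theory of $(G,X)$-structures with $G=\PGL(n+1,\R)$ and $X=P(\R^{n+1})$, reducing the global statement of proper convexity to a local condition on a compact core together with control of the cusp ends that is supplied precisely by the $VFG$ hypothesis. Fix $\rho_0\in Rep_{ce}(M)$, so that $\rho_0$ is the holonomy of a convex projective structure with developing map $dev_0:\tilde M\to\Omega_0$ a diffeomorphism onto a properly convex domain $\Omega_0$. I want to produce a neighborhood $U\ni\rho_0$ in $VFG(M)$ every member of which lies in $Rep_{ce}(M)$. Writing $M\cong\mathrm{int}(N)$ for a compact core $N$ whose boundary components are the cusp cross-sections, the first step is to invoke the Ehresmann--Thurston principle: any $\rho$ sufficiently close to $\rho_0$ is the holonomy of a projective structure on $N$ whose developing map $dev_\rho$ is $C^1$-close to $dev_0$ on a fundamental domain for $N$. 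This yields candidate developing maps for nearby $\rho$; what remains is to upgrade the resulting projective structure to a convex one, and this splits into an interior (compact-core) problem and a cusp problem.

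For the compact core, openness of proper convexity is essentially Koszul's argument. Since $\rho_0$ acts cocompactly on $\Omega_0$, an immersed developing map whose developed boundary is locally convex, together with cocompact holonomy, is forced to be a diffeomorphism onto a convex set; both immersion and local convexity of the developed boundary are open conditions, measured on the compact fundamental domain. Hence for $\rho$ near $\rho_0$ the core develops to a convex piece on which the action remains properly discontinuous. The non-compactness of $M$ is entirely pushed into the cusps.

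The cusps are where the $VFG$ hypothesis does the essential work. Each cusp group $\rho_0(\pi_1 P_j)$ is a lattice in some $H(\psi)$ and is therefore (virtually) upper triangular; the defining $VFG$ condition guarantees that $\rho(\pi_1 P_j)$ remains virtually conjugate into the upper-triangular matrices. One must then check that such a deformed, virtually abelian, virtually upper-triangular group is again the holonomy of a generalized cusp, so that by the Ballas--Cooper--Leitner uniformization (Theorem \ref{theorem: B-C-L uniformization}) it develops to a properly convex model domain $\Omega(\psi')$; the type $t(\psi')$ may jump under the deformation, exactly the phenomenon recorded in Remark \ref{remark: goofy topology}, but the end persists as a cusp. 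This step reduces to verifying that the translation/eigenvalue structure characterizing a lattice in $H(\psi)$ is stable under small perturbations inside the upper-triangular family.

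The main obstacle, and the heart of the argument, is the gluing. Having local convexity on the core and a properly convex model on each cusp, and having matching developing maps, I must certify that the developed images fit together into a single properly convex $\Omega_\rho$, with $dev_\rho$ a global diffeomorphism onto it and $\rho(\pi_1 M)$ acting properly discontinuously with quotient $M$. In the closed case cocompactness alone promotes local convexity to global proper convexity; here the cusp neighborhoods are not cocompact, so one instead combines the explicit proper convexity of the models $\Omega(\psi')$ with the interior control above to propagate convexity across the ends. The sharpest point is to rule out degeneration out the cusp as $\rho$ varies --- that $dev_\rho$ stays injective and $\Omega_\rho$ stays properly convex (rather than losing proper convexity or developing a boundary segment that destroys the embedding) along each cusp ray. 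The $VFG$ constraint is precisely what confines the cusp holonomy to the family for which this failure cannot occur, which is why the hypothesis is stated in terms of virtual upper-triangularity rather than mere proximity in the full representation variety.
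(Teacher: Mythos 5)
The first thing to say is that the paper does not prove this statement at all: it is imported verbatim as a special case of a theorem of Cooper--Long--Tillmann, so there is no internal proof to compare yours against. Judged on its own terms, your outline has the right architecture for how such a theorem is actually proved --- Ehresmann--Thurston to get nearby developing maps, a Koszul-type openness argument on a compact piece, and separate control of the ends using the constraint that cusp holonomy stays virtually upper-triangular --- but as written it is a plan rather than a proof, and the two steps you yourself identify as the hard ones are precisely the ones left unexecuted.

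Concretely: (1) Your compact-core step invokes Koszul's criterion in the form ``immersion plus local convexity plus cocompact holonomy forces a diffeomorphism onto a convex set,'' but $\rho_0(\pi_1 M)$ does \emph{not} act cocompactly on $\Omega_0$ here --- only the core is compact --- so the hypothesis under which that promotion from local to global convexity is automatic simply fails, and the non-compactness cannot be ``entirely pushed into the cusps'' without an argument that convexity of the developed core and convexity of the developed cusp models are compatible along the overlap. (2) The assertion that a small perturbation of a lattice in $H(\psi)$ which remains virtually upper-triangular is again the holonomy of a generalized cusp is stated as something that ``reduces to verifying'' an eigenvalue stability condition; that verification is a substantial piece of the Cooper--Long--Tillmann/Ballas--Cooper--Leitner machinery, not a routine check, and the type-jumping phenomenon you correctly cite from Remark \ref{remark: goofy topology} is exactly why it is delicate. (3) The gluing step --- certifying that the developed core and the developed cusp models assemble into a single properly convex $\Omega_\rho$ on which $\rho(\pi_1 M)$ acts properly discontinuously, with no degeneration out the ends --- is flagged as ``the heart of the argument'' and then described only as ``propagating convexity across the ends.'' Since that is where the actual content of the theorem lives, the proposal as it stands does not constitute a proof; it is a correct roadmap to the external reference.
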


For the purposes of this paper, this means that if we can guarantee that all cusps of $M$ have upper triangular holonomy after iterated bending, then for small bending parameters, iterated bending gives the holonomy of a convex projective structure on $M$. We will show that this condition is true in the course of the following, the main theorem.

\begin{theorem}[Main Theorem]\label{theorem: main theorem}

For each $n > 1$ and each $\psi \in \partial\mathscr{W}^n$, there exists a connected convex projective manifold $M$ (with no boundary) of dimension $n$ which decomposes as the union of a compact manifold with boundary and a finite collection of generalized cusps, one of which has cusp parameter $\psi$ (and is hence of type $t(\psi)$).
\end{theorem}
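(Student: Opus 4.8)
The plan is to assemble the pieces already developed in the paper: Theorem~\ref{theorem: existence} produces the raw geometric input, Lemma~\ref{lemma: local bending} converts bending parameters into cusp parameters, and Theorem~\ref{theorem: upper triangular is great} upgrades the bent holonomy to an honest convex projective structure. So the work is mostly about (a) realizing an arbitrary target $\psi\in\partial\mathscr{W}^n$ through the bending-parameter-to-cusp-parameter map, and (b) verifying the openness hypothesis is met so that small bending really does yield a convex structure.

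First I would invoke Theorem~\ref{theorem: existence} to obtain a finite-volume hyperbolic $n$-manifold $M$ carrying an orthogonally-intersecting family $\mathcal{S}=\{\Sigma_i\}_{i=2}^{n}$ of embedded totally geodesic codimension-$1$ submanifolds with property $(\star)$ at a torus cusp $P$, each meeting $P$ in a single component. (I would pass to a cover if needed so the indexing matches the paraboloid-model setup of Section~\ref{section: local bending}, placing $P$ at $[e_1]$ and each $\Sigma_i$ on $\Pi_i$.) This is precisely the hypothesis block preceding Lemma~\ref{lemma: local bending}, so iterated bending along $\mathcal{S}$ is well-defined and its effect on the cusp $P$ is controlled by that lemma.

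Next I would choose the bending parameters $s_i$ to hit the prescribed $\psi$. Given the target $\psi=(\psi_2,\dots,\psi_{t+1},0,\dots,0)\in\partial\mathscr{W}^n$ of type $t=t(\psi)<n$, Lemma~\ref{lemma: local bending} tells us the realized cusp parameter has $i$-th coordinate $a_i = b_i^2(e^{s_i}+1)/\big(2(e^{s_i}-1)s_i\big)$, with the $b_i>0$ fixed by the geometry. I would set exactly $t$ of the parameters to be nonzero and solve for the $s_i$ realizing the desired ratios; by Theorem~\ref{theorem: B-C-L}(3) the cusp parameter is only determined up to positive scaling, so I only need to match $\psi$ up to a positive multiple $r\psi$, which gives one free degree of scaling and makes the system solvable. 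A short analysis of $f(s)=(e^{s}+1)/\big(2(e^{s}-1)s\big)$ shows its range covers $(0,\infty)$ as $s$ ranges over $\R\setminus\{0\}$, so every nonzero coordinate is attainable; setting the remaining $s_i=0$ fixes the type at exactly $t$.

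Finally I would verify convexity. The bent holonomy keeps every cusp group virtually conjugate into the upper-triangular group: the deformed cusp $P$ lands in $H(\psi)$ by Lemma~\ref{lemma: local bending}, and the other cusps are either unbent or bent along submanifolds meeting them in the controlled way, so they remain upper-triangularizable; hence the bent representation lies in $VFG(M)$. The undeformed hyperbolic holonomy lies in $Rep_{ce}(M)$, so by Theorem~\ref{theorem: upper triangular is great} a neighborhood of it in $VFG(M)$ consists of convex projective structures. Choosing the $s_i$ small enough keeps us in this neighborhood while still realizing the correct type; since the cusp parameter is scale-invariant I may rescale $\psi$ freely to arrange the $s_i$ as small as needed. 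The main obstacle I anticipate is this last compatibility: ensuring that the \emph{specific} small parameters forced by openness still realize the full target ray $\psi$ (not merely some nearby type-$t$ parameter), and confirming that bending along one $\Sigma_i$ does not disturb upper-triangularity at the \emph{other} cusps that the submanifolds may also exit. Both are resolved by the scale-invariance in Theorem~\ref{theorem: B-C-L}(3) together with the orthogonality/commutativity established for $\mathcal{S}$, but this is the step that requires the most care.
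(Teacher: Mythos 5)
Your proposal is correct and follows essentially the same route as the paper: Theorem \ref{theorem: existence} supplies the manifold and submanifold family, Lemma \ref{lemma: local bending} together with the scale-invariance in Theorem \ref{theorem: B-C-L}(3) lets you realize any $\psi\in\partial\mathscr{W}^n$ with arbitrarily small bending parameters, and Theorem \ref{theorem: upper triangular is great} then yields convexity. The one step you flag as delicate --- that the \emph{other} cusps stay upper-triangular --- is exactly where the paper spends its remaining effort (a coordinate change placing the submanifolds meeting another cusp as $\ker(e_i^*)$ or parabolic translates thereof, so that bending acts by diagonal matrices conjugated by upper-triangular parabolics), and your appeal to the global pairwise orthogonality of $\mathcal{S}$ is the right ingredient for that argument.
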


\begin{proof}
%This is the combination of Theorem \ref{theorem: existence} and Lemma \ref{lemma: local bending}.

Let $M$ be the $n$-dimensional manifold given by Theorem \ref{theorem: existence}, and choose $(s_i)_{i=1}^{n-1}$ some bending parameters. Then let $\mathcal{S}=\{(\Sigma_i,s_i)\}_{i=1}^{n-1}$ be the collection of codimension-1 submanifolds with the desired intersection properties at $P$ guaranteed by Theorem \ref{theorem: existence} equipped with the chosen bending parameters. Let $\rho_\mathcal{S}$ be the representation obtained by iterated bending along $\mathcal{S}$ of the hyperbolic holonomy of $M$. 

The analysis in Lemma \ref{lemma: local bending} applies to $\rho_\mathcal{S}(\pi_1P)$, because the submanifolds intersect at some cusp orthogonally with property ($\star$), and only meet the cusp once. By varying the bending parameters, $P$ may be made to have parameter $\psi$, for any $\psi\in \partial\mathscr{W}^n$. Recall from Theorem \ref{theorem: B-C-L} that cusp parameters $\psi$ and $\psi'$ are equivalent if they differ by a positive scalar. Hence, any cusp parameter $\psi$ may be achieved (up to equivalence) by using arbitrarily small bending parameters. That is, given any $\epsilon>0$ and $\psi \in \partial\mathscr{W}^n$, we may scale $\psi$  to $k\psi$, so that for each $i$ where $\psi_i$ is non-zero, 

$$k \psi_i> \frac{b_i^2(e^{\epsilon}+1)}{2(e^{\epsilon}-1)(\epsilon)},$$

the function for the cusp parameters found in Section \ref{section: bending examples}.

All that remains is to show that for sufficiently short bending parameters, the holonomies of the other cusps of $M$ are conjugate to upper triangular groups. To see this, suppose $P'$ is some other cusp of $M$. Change coordinates so that $P'$ is centered at $e_1$, and its hyperbolic holonomy is in the type-0 cusp group (the standard parabolic subgroup). Some subset $\mathcal{S}'$ of $\mathcal{S}$ meet $P'$. Since these surfaces are orthogonal, they meet $P'$ in a collection of orthogonal or parallel codimension-1 submanifolds. Perform a change of coordinates fixing $e_1$ and rotating in the hyperplane $\ker(e_1^*)$, composed with some parabolic element in $H(0)$ (the cusp group) so that each of these hyerplanes is $\ker(e_i^*)$ (for some $i$ not equal to $1$) or some parabolic translate of $\ker(e_i^*)$. 

Note that this change of coordinates preserves the parabolic subgroup. Furthermore, note that bending along these submanifolds now amounts to multiplication by diagonal elements and their conjugates finitely many ends, all of which are generalized cusps. Let $VFG(M)$ be the set of representations of its fundamental group so that by upper-triangular parabolics. Hence, the bent group remains upper triangular. Now, Theorem \ref{theorem: upper triangular is great} completes the proof by providing a small neighborhood of convex projective holomies achieved by the iterated bending around the initial hyperbolic holonomy of $M$. That is, there exists some $\epsilon>0$ so that when (for all $i$) $s_i<\epsilon$, $\rho_\mathcal{S}$ is the holonomy of a convex projective structure on $M$, and all of the cusps of $M$ are generalized cusps.

The decomposition of $M$ into a union of compact core and non-compact ends is topological data, and the bending deformation of the geometric structure on $M$ does not change its topology.

\end{proof}

Note that as $\epsilon$ approaches zero, the quantity in the proof above tends to $\infty$. As stated in Remark \ref{remark: goofy topology}, this is a result of the fact that the map taking a generalized cusp to its parameter $\psi$ is not continous with respect to the usual topology on $\mathscr{W}^n$.

\section{Further questions}\label{section: further questions}
While we have demonstrated the existence of interesting manifolds possessing almost all non-standard cusp types, some important questions remain. Theorem \ref{theorem: existence} utilizes separability arguments so frequently that it seems likely that the resulting manifold with the desired submanifold arrangement is a large cover of the original orbifold. It would be difficult to identifiy it more concretely using this construction, and it may be of interest to find more tractable manifolds with the desired submanifold arrangement.

The argument presented here gives control over the geometry at a single cusp. It would be a desirable extension to gain control over more or all ends of the manifold $M$, although it is not clear exactly how this would be done. A worthy goal would be to produce examples of manifolds with at least $k(t)$ cusps of type $t$ for all $t<n$, for any numbers $k(t)$ desired. The author does not think that this is a trivial extension, however.

In the course of this paper, a previous argument suggested that, in fact, bending along orthogonal submanifolds for arbitrary time parameters should give convex projective structures. This is known in the case of bending hyperbolic manifolds due to Marquis \cite{Marquis}. This argument was not used, as it is unnecessary and quite long. Furthermore, it should be the case that for a bending path $\rho_\mathcal{S}$, the map $h_\mathcal{S}=dev_0^{-1}\circ dev_\mathcal{S}:\Omega_0\rightarrow \Omega_S$ should extend to a homeomorphism $\bar{h}_\mathcal{S}$ on the complement of parabolic fixed points of $\partial\Omega_0$. In addition, the author believes it is possible to show using careful analysis on the boundary that the only segments appearing in the boundary of $\Omega_S$ are part of cusp boundaries.

\subsection{Diagonalizable cusps}

Type $n$ (diagonalizable) cusps are not achieved in this paper. Yet they ought to be the most common, as any coincidence of eigenvectors should be non-generic. Diagonalizable cusps also offer the possibility of constructing projective manifolds which are not deformations of hyperbolic manifolds. Benoist  explores the role of properly embedded triangles in the context of $3$-dimensional convex domains with cocompact actions by discrete groups (``divisible'' convex sets), and shows that the resulting quotient manifolds geometrically JSJ decompose into hyperbolic pieces along the quotients of these triangles \cite{Benoist}. The JSJ pieces achieved this way are manifolds with type $n$ cusps.

The technology of Coxeter polytopes provides examples of convex projective manifolds with diagonal cusps in small dimensions. In dimension not greater than $6$, Benoist uses this method to construct compact convex projective manifolds with properly embedded codimension-$1$ simplices. These examples decompose into unions of manifolds with ends that are all type $n$ cusps \cite{Benoist}.

There is some possibility of bending an $n$-manifold with a type $(n-1)$ cusp along a codimension-$1$ submanifold to achieve a manifold with a type $n$ cusp. However, finding such a submanifold that is embedded and whose fundamental group has non-trivial centralizer seems difficult. We can observe directly that such a deformation is possible in the model type $(n-1)$ cusp. Below is an outline of this calculation. Recall the model domains and groups from Section \ref{section: cusp neighborhoods}.

Note that the intersection of the hyperplane $e_1^*$ with the $n$-dimensional type $(n-1)$ model domain is a type $(n-1)$ cusp comain in dimension $(n-1)$. Its stabilizer is a diagonal group, given below as $H$, where $\boldsymbol{D}$ denotes a diagonal matrix.

$$H = \left\{\left( \begin{array}{ccc}
1 & 0 & 0  \\
0 & \boldsymbol{D} & 0 \\
0 & 0 & 1 
\end{array} \right)\right\}$$

The centralizer $Z(H)$ contains a diagonalizable subgroup which fixes every point in the hyperplane $e_1^*$ and the point $(e_1+k e_{n+1})$, namely 

$$Z'(H)=\left\{\left( \begin{array}{ccc}
\lambda & 0 & 0 \\
0 & \boldsymbol{I} & 0\\
k(\lambda-1) & 0 & 1   
\end{array} \right)\right\}.$$

It is an algebraic exercise to see that bending along this submanifold with a path in the centralizer parametrized by $\lambda$ produces a diagonalizable group. 

Suppose $M$ is an $n$-manifold with a type $(n-1)$ cusp $P$ and presume we choose coordinates for the domain covering $M$ so that a domain covering $P$ is coordinatized as the model type $(n-1)$ domain. If we can find a codimension-$1$ embedded submanifold meeting the boundary of the cusp in the simplex spanned by $e_2,\dots,e_n$, and so that the fundamental group of this submanifold acts reducibly with a preserved one-dimensional subspace $(e_1+k e_{n+1})$, we may bend along this submanifold to produce a representation where the holonomy of some cusp of $M$ deforms to type $n$. Theorem \ref{theorem: upper triangular is great} then guarantees that at least for some small values of $\lambda$, $M$ retains a convex projective structure.

\begin{figure}[h]
\centering
\includegraphics[width=.5 \textwidth]{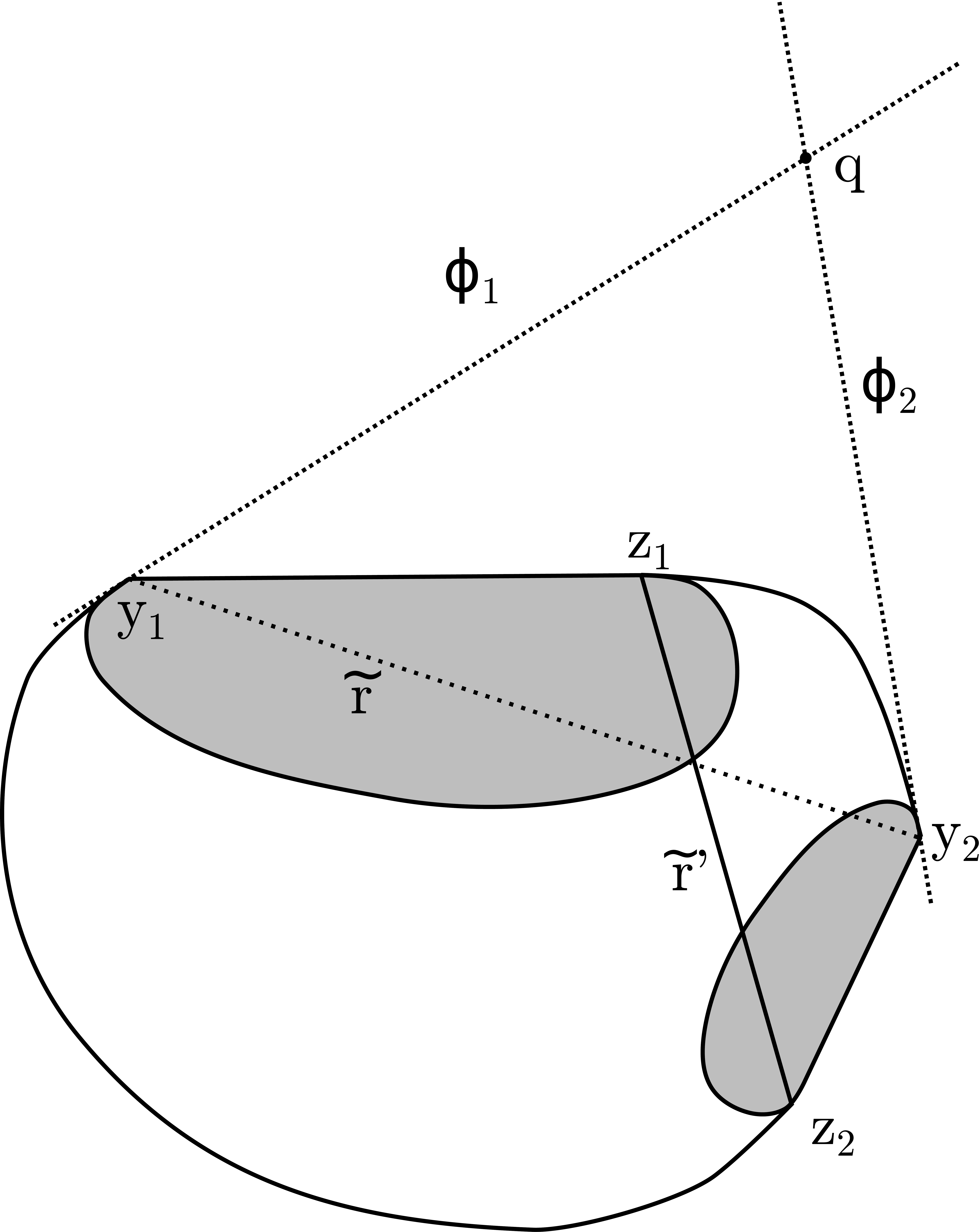}
\caption{A schematic of the domain $\Omega_{t_0}$ and covers of the arcs $r$ and $r'$. Type $1$ horoballs are shaded.}
\label{figure: surface double bending}
\end{figure}

It is not difficult to construct such a deformation for surfaces, because the necessary codimension-one submanifold is an arc, and thus has trivial fundamental group. For example, if we choose a finite-volume hyperbolic surface $\Sigma$ with more than one cusp, and an embedded arc $r$ between two cusps $P_1$ and $P_2$ in $\Sigma$, we may explicitly describe the deformation and its effect on the structures of $P_1$ and $P_2$.

Firstly, we perform the bending described in Lemma \ref{lemma: local bending} along the arc $r$ for some non-zero parameter $t_0$, which changes the structure of $P_1$ and $P_2$ to type $1$. Now, consider one connected cover of the arc $r$ in the domain $\Omega_{t_0}$ (the domain preserved by the bent representation). It meets $\partial \Omega_{t_0}$ in a pair of points, $y_1$ and $y_2$ and each of these points is an endpoint of a segment in $\partial \Omega_{t_0}$, which are the boundaries of the type $r$ model cusp domains covering $P_1$ and $P_2$. 

Call the other two endpoints of these boundary segments $z_1$ and $z_2$, and let $r'$ be the image of the arc between them in the surface. The points $y_1$ and $y_2$ each have a segment of supporting hyperplanes. One of the endpoints of this segment includes $z_i$, the opposite endpoint does not. Call $\phi_i$ the supporting hyperplane through $y_i$ that is an endpoint of the segment of supporting hyperplanes through $y_i$ and does not include $z_i$. Figure \ref{figure: surface double bending} depicts the domain $\Omega_{t_0}$ and the relevant hyperplanes.

The hyperplanes $\phi_1$ and $\phi_2$ meet at some point $q$ outside of $\Omega_{t_0}$. The matrix pointwise fixing the subspace spanned by $\tilde{r}'$ and with $\lambda$-eigenspace at $q$ is an example of the bending matrix in $Z'(H)$ from the above calculation, and so any such surface admits a bending deformation to a pair of type $2$ cusps. That is, after bending along $r$, bend again along $r'$, changing the structures at $P_1$ and $P_2$ to type $2$. 

While this demonstrates a multitude of examples of type $2$ cusps in cusped surfaces, in any higher dimension it is not obvious how to find a suitable manifold, submanifold pair which permits this bending.
Note that the two bending deformations in this construction do not commute. Indeed, bending along either $r$ or $r'$ makes the other curve cease to be totally geodesic. In particular, if $h=dev_0 \circ (dev_{t_0}^{-1})$, then certainly $h(\tilde{r}')$ is not a totally geodesic submanifold in the initial hyperbolic structure on $\Sigma$. This is indicative of the difficulty in finding an embedded totally geodesic submanifold along which to bend in the higher dimensional examples constructed earlier in this paper. That is, it is not clear what submanifold one should look for in the arithmetic hyperbolic manifold which we deform.

\section{Acknowledgements}
The author would like to thank his advisor, Jeff Danciger for encouragement. In addition, he extends his gratitude to Sam Ballas for his helpful conversation and willingness to exchange ideas, and Alan Reid for his advice and guidance. The author acknowledges support from U.S. National Science Foundation grants DMS 1107452, 1107263, 1107367 "RNMS: Geometric Structures and Representation Varieties" (the GEAR Network), as well as support by NSF Research and Training Grant DMS-1148940.

\newpage

\bibliographystyle{amsplain}
\bibliography{iterated_bending}

\nocite{Scott2}

\end{document}